\newtheorem{theorem}{Theorem}[section]
\newtheorem{openquestion}[theorem]{Open Question}
\newtheorem{corollary}[theorem]{Corollary}
\newtheorem{lemma}[theorem]{Lemma}
\newtheorem{remark}[theorem]{Remark}
\numberwithin{equation}{section}
\def\N{{\mathbb N}}
\def\I{{\mathcal I}}
\newcommand{\R}{\mathbb{R}}
\newcommand{\rn}{\mathbb{R}^n}
\newcommand{\dx}{\,{\rm d}x}
\newcommand{\dy}{\,{\rm d}y}
\newcommand{\dz}{\,{\rm d}z}
\newcommand{\dn}{\,{\rm d}\nu}
\newcommand{\ds}{\,{\rm d}s}
\newcommand{\du}{\,{\rm d}u}
\newcommand{\dt}{\,{\rm d}t}
\newcommand{\barint}{
	\rule[.036in]{.12in}{.009in}\kern-.16in \displaystyle\int }
\newcommand{\barcal}{\mbox{$ \rule[.036in]{.11in}{.007in}\kern-.128in\int $}}
\let\@wraptoccontribs\wraptoccontribs
\mathchardef\mhyphen="2D
\title{On sharp constants in higher order Adams-Cianchi inequalities}
\author[P. Roychowdhury]{Prasun Roychowdhury}
\address[P. Roychowdhury]{Mathematics Division, National Center for Theoretical Sciences, NTU, Cosmology Building, No. 1, Sec. 4, Roosevelt Rd., Taipei City, Taiwan 106, R.O.C.
}
\email{prasunroychowdhury1994@gmail.com}
\author[D. Spector]{Daniel Spector}
\address[D. Spector]{Department of Mathematics, National Taiwan Normal University, No. 88, Section 4, Tingzhou Road, Wenshan District, Taipei City, Taiwan 116, R.O.C.\\
\newline
and
\newline
National Center for Theoretical Sciences\\No. 1 Sec. 4 Roosevelt Rd., National Taiwan
University\\Taipei, 106, Taiwan
\newline
and
\newline
Department of Mathematics, University of Pittsburgh, Pittsburgh, PA 15261 USA
}
\email{spectda@protonmail.com}
\subjclass[2010]{46E35, 46E30, 35A23, 26D10}
\keywords{Adams-Cianchi Inequality, Trace Moser-Trudinger Inequality, Trace Hansson-Brezis-Wainger Inequality, Best Constant, Critical Sobolev Embedding, Lorentz-Sobolev Spaces}
\date{\today}
\begin{document}
	
	\maketitle
	
	\begin{abstract}
The main results of this paper are the establishment of sharp constants for several families of critical Sobolev embeddings.  These inequalities were pioneered by David R. Adams, while the sharp constant in the first order case is due to Andrea Cianchi.  We also prove a trace improvement of an inequality obtained independently by K. Hansson and H. Brezis and S. Wainger.
	\end{abstract}

	\section{Introduction}
	Let $\Omega \subset \mathbb{R}^n$ be an open, bounded set and suppose $u \in W^{k,p}_0(\Omega)$ with $kp=n$.  This regime is critical for the Sobolev embedding, where a well-known result is the exponential integrability of $u$ at an appropriate power:  There exist constants $c,C>0$ such that
	\begin{align}\label{critical}
		\int_\Omega \exp\left(c\left|\frac{u(x)}{\| \nabla^k u\|_{L^p}}\right|^{p'}\right) \;dx \leq C,
	\end{align}
	for all $u \in W^{k,p}_0(\Omega)$, where $\nabla^k u$ is the usual iterated gradient, i.e. $\nabla^k u = \nabla (\nabla^{k-1}u)$, and $\| \nabla^k u\|_{L^p} \equiv \| |\nabla^k u| \|_{L^p(\Omega)}$.  The history of such an estimate has been accounted for many times (see, e.g. \cite{CP}), in particular the assertions of Yudovich concerning estimates for Riesz potentials \cite[Theorem 3 on p.~808]{Y:1962} and of Pohozaev in two dimensions  \cite[Lemma 1 on p.~1409]{P:1965}, Peetre's result for the Besov spaces \cite[Theorem 8.2 on p.~302]{P:1966}, Trudinger's result with the best exponent in the first order case \cite[Theorem 2 on p.~478]{T:1967}, Strichartz's result for Bessel potentials which obtains the correct exponent \cite{S:1971}, and Hedberg's result for Riesz potentials \cite{H:1972}.  In particular, the inequality \eqref{critical} can be argued by Hedberg's estimate and a simple pointwise bound for a function in terms of its Riesz potential:
	\begin{align}\label{representation_inequality}
		|u(x)| \leq c I_k|\nabla^k u|(x)
	\end{align}
	for some $c=c(k,n)>0$ (see Section \ref{preliminaries} for a precise definition of $I_\alpha$, $\alpha \in (0,n)$).
	
	It seems less well-known that the inequality \eqref{critical} admits an improvement with respect to the dimension of integration, the trace exponential integrability inequality:  For every $d \in (0,n]$, there exist constants $c_d,C_d>0$
	\begin{align}\label{critical_trace}
		\int_\Omega \exp\left(c_d \left|\frac{u(x)}{\| \nabla^k u\|_{L^p}}\right|^{p'}\right) \;d\nu \leq C_d
	\end{align}
	for all $u \in W^{k,p}_0(\Omega)$ and for all Radon measures $\nu$ which satisfy the ball growth condition 
	\begin{align}\label{bgc}
		\nu(B(x,r))\leq C_d' r^d
	\end{align}
for all $x \in \mathbb{R}^n$ and $r>0$.
Indeed, when $d =k \in \mathbb{N}$ and $\nu=\mathcal{H}^k|_{\Omega \cap H}$ for any hyperplane $H$, an inequality in the spirit of \eqref{critical_trace} for Riesz potentials is a more precise recounting of Yudovich's assertion in \cite[Theorem 3 on p.~808]{Y:1962}.  For general Radon measures $\nu$ which satisfy \eqref{bgc}, the analogous estimate for a Bessel potential is due to D.R. Adams \cite[Theorem 3(ii) on p.~912]{Adams1973}, while an estimate involving Riesz potentials can be found in the paper of the second author and \'{A}. Martinez \cite{MS} (see also \cite{AdamsXiao, AdamsXiao2, Chen-Spector} for similar critical estimates and \cite{RSS} for a related dual estimate).  As in the estimate \eqref{critical}, the inequality \eqref{critical_trace} can be argued by a Riesz potential estimate \cite[Corollary 1.4 on p.~880]{MS} and the representation inequality \eqref{representation_inequality}.
	
	Concerning the family of inequalities \eqref{critical}, there is a fairly complete picture with regard to the best constant.  In the first order case, Moser \cite{Moser} showed that one may take $c=n\omega_n^{n/(n-1)}$  in the inequality \eqref{critical} and that the inequality fails for any larger value of $c$.  This first order case with the best constant has been called the Moser-Trudinger inequality, while when the order of the derivative is greater than one, D.R. Adams gave a fundamental contribution toward a higher order Moser-Trudinger inequality in his paper \cite{Adams1988}, which paved the way for the establishment of the best constant in this setting.  In particular, in his paper, Adams obtains the best constant for the case of the analogous Riesz potential embedding, which gives the sharp constant for $p=\frac{n}{k}=2$, while the remaining cases were subsequently settled in the paper \cite{SS}.  The program initiated by Adams and completed in \cite{SS} involves two main steps:  
	\begin{enumerate}
		\item Obtain the best constant in the analogous Riesz potential embedding; 
		\item Derive and utilize a sharp form of the representation formula \eqref{representation_inequality}.
	\end{enumerate}
	The second step played a more minor role in Adams' paper, as in place of $\nabla^k u$, he utilized the differential operator
	\begin{align*}
		D^k u :=\left\{
		\begin{array}{ll}
			\nabla (-\Delta)^{(k-1)/2} u , &\quad \text{k odd},\\
			(-\Delta)^{k/2} u , &\quad \text{k even}.\\
		\end{array} 
		\right.
	\end{align*}
	This simplifies the sharp representation formulas bounding $u$ in terms of the Riesz potential of $|D^k u|$, as a similar inequality between $u$ and the Riesz potential of $|\nabla^k u|$ turns out to be more subtle.  This sharp representation was obtained by I. Shafrir and the second author in \cite{SS}, based on Shafrir's work on a sharp constant in the critical $L^\infty$ embedding \cite{Shafrir} and ideas developed by the second author and R. Garg \cite{GS,GS1}.

	The purpose of this paper is to give an analogous treatment of the question of best constant for the family of inequalities \eqref{critical_trace}.  To this end, we first recall the remarkable result of A. Cianchi, who among other results, in \cite{Cianchi:2008} obtained the sharp constant in the inequality \eqref{critical_trace} in the first order case, as well as when $|\nabla u|$ is in a critical Lorentz space.  Cianchi's result is a natural extension of Moser's celebrated result \cite{Moser} concerning the sharp constant for first-order gradients in \eqref{critical}, and we will refer to this first order case as the Adams-Cianchi inequality.   Concerning the higher order Adams-Cianchi inequality, a fundamental contribution was made in the paper of Fontana and Morpurgo \cite{FontanaMorpurgo}, where they obtained the sharp constant in the analogue of Adams' result \cite{Adams1988} for the Riesz potential where the Lebesgue measure is replaced by general Radon measures $\nu$ which satisfy \eqref{bgc}, along with the sharp constant for the analogous inequality utilizing Adams' derivative $D^ku$.  As in the case of Adams' work, this includes the sharp constant in \eqref{critical_trace} for $p=\frac{n}{k}=2$ (see \cite[Theorem 12 on p.~5104]{FontanaMorpurgo}).  That one cannot improve the constant requires a non-degeneracy condition on the measure, that the general upper bound has a matching lower bound at some place in the domain:  There exists $x_0 \in \Omega$, $ \varrho_0>0$, and $C_0>0$ such that
	\begin{align}\label{condition-1}
		C_0 \varrho^d\leq \nu(B(x_0,\varrho)\cap \Omega) \text{ for all } 0<\varrho\leq \varrho_0.
	\end{align}

	Thus one observes that a principle remaining question is that of the establishment of sharp constants in the family of inequalities \eqref{critical_trace} for $k \in \mathbb{N}, 1<k<n$ for $p \neq 2$.  Secondary points of interest (the latter two anticipated by Fontana and Morpurgo \cite[Remark 1 on p.~5073 and the bottom of p.~5093]{FontanaMorpurgo}) include whether one has a similar sharp trace inequalities for
	\begin{enumerate}
		\item  $\nabla^k u$ in critical Lorentz spaces;
		\item the Riesz potentials acting on functions in critical Lorentz spaces;
		\item  Adams' derivative $D^k u$ in critical Lebesgue and Lorentz spaces.
	\end{enumerate}
Note that for the Lebesgue measure, the Lorentz scale extension of Adams' result in $(2)$ and $(3)$ have been addressed by A. Alvino, V. Ferone, G. Trombetti \cite{AFT} and A. Alberico \cite{Alberico}.  

In this paper, we obtain the sharp constants in the family of inequalities \eqref{critical_trace} and complete the picture concerning $(1),(2),$ and $(3)$ for the Lorentz spaces $L^{p,q}(\Omega)$ for $p$ critical and $1<q<+\infty$.  In particular, we demonstrate the validity of these various inequalities with a particular constant, and assuming the non-degeneracy condition \eqref{condition-1}, show the optimality of these constants.     The computations for optimality involve variants of a standard test function for this regime -- the logarithm of the distance to a point.  For $\nabla^k, D^k$ we utilize the test functions from \cite{Shafrir, SS}, while for the Riesz potential we use Adams' capacitary test functions \cite{Adams1988}.  

Our first result is an answer to the principle question, on the sharp constants in the higher order Adams-Cianchi inequalities \eqref{critical_trace}, including their extension to the Lorentz scale, the following 
	\begin{theorem}\label{main-thm-0}
		Let $\Omega$ be an open bounded subset of $\rn$, $n\geq 2$. Let $k \in \mathbb{N}$, $1\leq k<n$, $1<q<\infty$, and set $q'=q/(q-1)$.  Let $\nu$ be any positive Borel measure on $\Omega$ which satisfies \eqref{bgc} and \eqref{condition-1} for some $d\in (0,n]$.  Then there exists positive constant $C=C(n,k,\Omega, C'_d, q)$ such that 
		\begin{align}\label{eqn-ss-lorentz}
			\int_\Omega \exp \left( \beta |u(x)|^{q^\prime}\right) \dn \leq C,
		\end{align}
		for every $\beta \leq	d n^{\frac{q'}{q}}\,\omega_n^{\frac{k}{n}q'}\,\sqrt{\ell^k_n}^{q'}$ and every $u\in W_0^k L^{\frac{n}{k},q}(\Omega)$ with $\|\, |\nabla^k u|\, \|_{L^{\frac{n}{k},q}(\Omega)}\leq 1$.  Moreover, the result is sharp in the sense that the l.h.s.~of \eqref{eqn-ss-lorentz}, with any $\beta>d n^{\frac{q'}{q}}\,\omega_n^{\frac{k}{n}q'}\,\sqrt{\ell^k_n}^{q'}$, cannot be uniformly bounded for every $u\in W_0^k L^{\frac{n}{k},q}(\Omega)$ with \sloppy{
$\|\, |\nabla^k u|\, \|_{L^{\frac{n}{k},q}(\Omega)}\leq 1$}.
	\end{theorem}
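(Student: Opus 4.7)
The plan is to follow the two-step program outlined in the introduction: first reduce \eqref{eqn-ss-lorentz} for $\nabla^k u$ to an analogous inequality for the Riesz potential $I_k f$ with $f\in L^{n/k,q}(\Omega)$ via a sharp pointwise representation formula, and then invoke the sharp Lorentz trace inequality for $I_k f$, matching the constants precisely. The factorization of the final constant as the product $d\, n^{q'/q}\,\omega_n^{kq'/n}$ (a Fontana--Morpurgo-type Riesz constant) times $\sqrt{\ell_n^k}^{\,q'}$ (a representation constant) already signals such a two-step decomposition.

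For the upper bound in \eqref{eqn-ss-lorentz}, I would start from a sharp pointwise bound in the spirit of Shafrir--Spector \cite{SS},
\[
|u(x)| \leq \frac{1}{\sqrt{\ell_n^k}}\,I_k|\nabla^k u|(x),
\]
which refines \eqref{representation_inequality} with the optimal constant $1/\sqrt{\ell_n^k}$ distinguishing $\nabla^k$ from Adams' operator $D^k$. Setting $f=|\nabla^k u|$, the hypothesis gives $\|f\|_{L^{n/k,q}(\Omega)}\leq 1$ and
\[
|u(x)|^{q'} \leq \frac{1}{\sqrt{\ell_n^k}^{\,q'}}\,|I_k f(x)|^{q'}.
\]
I would then apply a sharp Lorentz-scale Adams--Cianchi inequality for the Riesz potential $I_k$ -- which I expect the paper to establish as an independent theorem along the agenda item (2) -- of the form $\int_\Omega \exp(\beta_0 |I_k f|^{q'})\,d\nu\leq C$ whenever $\|f\|_{L^{n/k,q}(\Omega)}\leq 1$, with the critical constant $\beta_0=d\,n^{q'/q}\,\omega_n^{kq'/n}$. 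Combining the two ingredients yields \eqref{eqn-ss-lorentz} precisely for $\beta\leq d\,n^{q'/q}\,\omega_n^{kq'/n}\,\sqrt{\ell_n^k}^{\,q'}$.

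For the sharpness assertion, I would construct a family of test functions $u_\varepsilon$ concentrating at the point $x_0$ provided by \eqref{condition-1}. These would be obtained by truncating and smoothing a suitable multiple of $(\log(1/|x-x_0|))^{1/q'}$, a Lorentz-scale variant of the extremals of \cite{Shafrir,SS}. The role of the non-degeneracy condition \eqref{condition-1} is to supply enough mass of $\nu$ near $x_0$ so that the integral $\int\exp(\beta |u_\varepsilon|^{q'})\,d\nu$ is bounded below by a divergent quantity as $\varepsilon\to 0^+$ for any $\beta>d\,n^{q'/q}\,\omega_n^{kq'/n}\,\sqrt{\ell_n^k}^{\,q'}$, while the normalization $\|\,|\nabla^k u_\varepsilon|\,\|_{L^{n/k,q}(\Omega)}\leq 1$ is preserved. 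The two constants $d\,n^{q'/q}\,\omega_n^{kq'/n}$ and $\sqrt{\ell_n^k}^{\,q'}$ emerge from distinct computations: the former from the Lorentz $L^{n/k,q}$-norm of the radial profile (exactly as in the Riesz-potential sharpness), and the latter from the precise asymptotics of $|\nabla^k (\log 1/|\cdot|)^{1/q'}|$, where $\ell_n^k$ encodes the geometric factor hidden in the iterated gradient of a radial logarithm.

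The principal obstacle is the first step: securing a sharp representation with constant exactly $1/\sqrt{\ell_n^k}$ in a form robust enough to pass to exponential integrability with no loss in the constant, and arranging the test-function computation so that both the representation and the Riesz--Lorentz trace inequality are saturated simultaneously. A secondary but delicate task is the asymptotic evaluation of $\|\,|\nabla^k u_\varepsilon|\,\|_{L^{n/k,q}(\Omega)}$ for $1<q<\infty$ with $q\neq n/k$: the exact leading-order constant must be tracked through the Lorentz norm's rearrangement-based definition, which requires a more careful analysis than in the Lebesgue regime addressed in \cite{SS} and \cite{FontanaMorpurgo}.
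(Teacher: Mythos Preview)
Your two-step strategy is exactly the paper's: invoke the sharp pointwise representation \eqref{pt-wise-org} to reduce to the Riesz-potential Theorem~\ref{main-thm}, and for sharpness use logarithmic test functions concentrating at $x_0$. Two concrete discrepancies deserve correction, though.

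First, both of your intermediate constants are wrong, even though their product is right. The sharp representation from \cite{SS}, recorded as \eqref{pt-wise-org}, reads
\[
|u(x)|\leq \frac{\gamma(k)}{n\omega_n\sqrt{\ell_n^k}}\,I_k|\nabla^k u|(x),
\]
not $|u|\leq \tfrac{1}{\sqrt{\ell_n^k}}I_k|\nabla^k u|$; correspondingly the sharp Riesz constant established in Theorem~\ref{main-thm} is $\tfrac{d}{n}\gamma(k)^{q'}\omega_n^{-(n-k)q'/n}$, not $d\,n^{q'/q}\omega_n^{kq'/n}$. The factor $\gamma(k)/(n\omega_n)$ you dropped from the representation reappears (raised to the $q'$ power) in the Riesz constant, so your errors cancel---but each half of the decomposition you describe is individually incorrect.

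Second, for sharpness the paper does \emph{not} build test functions on $(\log(1/|x|))^{1/q'}$. It uses the plain truncated logarithm $u_\epsilon$ from \cite[Proposition~2.1]{SS}, equal to $\log(1/|x|)$ on $B_1\setminus B_\epsilon$, for which the exact identity $|\nabla^k\log|x||=\sqrt{\ell_n^k}/|x|^k$ makes the Lorentz norm computable directly via Lemma~\ref{lor-break} and the explicit rearrangement of $|x|^{-k}$ on an annulus. The exponent $q'$ in the exponential then produces the factor $(\log(1/\epsilon))^{1/q'}$ automatically when one divides $u_\epsilon\sim\log(1/\epsilon)$ by $\|\nabla^k u_\epsilon\|_{L^{n/k,q}}\sim(\log(1/\epsilon))^{1/q}$. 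Your anticipated ``delicate'' chain-rule analysis of $|\nabla^k(\log(1/|\cdot|))^{1/q'}|$ is therefore unnecessary and is entirely avoided in the paper.
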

	
	Here $\sqrt{\ell^k_n}$ is a combinatorial constant such that
	\begin{align}\label{combinatorial_constant}
		|\nabla^k \log |x| | = \frac{\sqrt{\ell^k_n}}{|x|^k},
	\end{align}
for $x \neq 0$, see Section \ref{preliminaries} below.  The appearance of this constant in optimality emerged through a technique pioneered by I. Shafrir in \cite{Shafrir} and further developed by Shafrir and the second author in \cite{SS}:  Up to proper normalization, $\log |x|$ is the fundamental solution of the family of elliptic equations
	\begin{align*}
		(-1)^k \operatorname*{div_k} |x|^{2k-n} \nabla^k u = 0.
	\end{align*}
	This fact yields \eqref{representation_inequality} with a constant that finds no loss in the critical regime of interest to us.  This reduces the question to a sharp estimate for Riesz potentials, and so the proof of Theorem \ref{main-thm-0} follows from this representation and
	\begin{theorem}\label{main-thm}
Let $\Omega$ be an open bounded subset of $\rn$, $n\geq 2$. Let $\alpha \in (0,n)$, $1<q<\infty$, and set $q'=q/(q-1)$.  Let $\nu$ be any positive Borel measure on $\Omega$ which satisfies \eqref{bgc} and \eqref{condition-1} for some $d\in (0,n]$.  Then there exists positive constant $C=C(n,\alpha,\Omega, C'_d, q)$ such that 
		\begin{align}\label{eqn-main-thm}
			\int_\Omega \exp \left(\beta |I_\alpha f(x)|^{q'} \right) \dn \leq C,
		\end{align}
for every $\beta \leq \frac{d}{n}\gamma(\alpha)^{q'}\omega_n^{-\frac{n-\alpha}{n}q'}$ and every $f\in L^{\frac{n}{\alpha},q}(\Omega)$ with $\|\, |f|\, \|_{L^{\frac{n}{\alpha},q}(\Omega)}\leq 1$.  		
Moreover, the result is sharp in the sense that the l.h.s.~of \eqref{eqn-main-thm}, with any $\beta>\frac{d}{n}\gamma(\alpha)^{q'}\omega_n^{-\frac{n-\alpha}{n}q'}$, cannot be uniformly bounded for every $f\in L^{\frac{n}{\alpha},q}(\Omega)$ with $\|\, |f|\, \|_{L^{\frac{n}{\alpha},q}(\Omega)}\leq 1$.
	\end{theorem}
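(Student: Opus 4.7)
My plan is to follow the two-step scheme established by Adams \cite{Adams1988} and Fontana--Morpurgo \cite{FontanaMorpurgo} for the Lebesgue case, upgraded to Lorentz data. The first step is a rearrangement estimate: by the Hardy--Littlewood inequality and the fact that the Riesz kernel $|x-y|^{\alpha-n}$ has non-increasing rearrangement equal to a multiple of $t^{(\alpha-n)/n}$, one bounds $|I_\alpha f(x)|$ by a kernel integral against $f^*$, and this immediately yields the factors $\gamma(\alpha)^{-1}$ and $\omega_n^{-(n-\alpha)/n}$ present in the sharp constant. The second step uses the ball growth condition \eqref{bgc} to compare integration against $\nu$ with one-dimensional integrals via the change of variable $s = \omega_n r^n \mapsto \nu(B(x,r))\lesssim r^d$, which introduces the ratio $d/n$ into the final exponent. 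With these reductions in place, \eqref{eqn-main-thm} reduces to a sharp Hansson--Brezis--Wainger-type one-dimensional exponential inequality for Lorentz functions on the half-line, a version of which is available from \cite{AFT, Alberico} in the Lebesgue case and whose trace adaptation follows the template of \cite{MS}.

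\textbf{Sharpness.} For the lower bound I would use the capacitary test function approach of Adams \cite{Adams1988}, in its Lorentz form. Fix the point $x_0 \in \Omega$ furnished by \eqref{condition-1}, choose $\rho_0$ small with $B(x_0,\rho_0)\subset\Omega$, and for $0<\rho<\rho_0$ set
\begin{align*}
f_\rho(y) := \frac{\omega_n^{-\alpha/n}}{(n\log(\rho_0/\rho))^{1/q}}\cdot\frac{\chi_{B(x_0,\rho_0)\setminus B(x_0,\rho)}(y)}{|y-x_0|^{\alpha}}.
\end{align*}
A direct polar-coordinate computation gives $\|f_\rho\|_{L^{n/\alpha,q}(\Omega)}=1$ and
\begin{align*}
I_\alpha f_\rho(x_0) = \gamma(\alpha)^{-1}\,n^{1/q'}\,\omega_n^{(n-\alpha)/n}\bigl(\log(\rho_0/\rho)\bigr)^{1/q'}.
\end{align*}
Splitting the integral defining $I_\alpha f_\rho(x)$ into near and far contributions from $x_0$ and exploiting the vanishing normalization $c_\rho = O((\log(1/\rho))^{-1/q})$ yields the additive asymptotic $I_\alpha f_\rho(x) = I_\alpha f_\rho(x_0)+o(1)$ uniformly for $x\in B(x_0,\rho/2)$ as $\rho\to 0^+$. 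Combined with the lower bound $\nu(B(x_0,\rho/2))\geq C_0(\rho/2)^d$ from \eqref{condition-1} and raising to the $q'$-power, this gives
\begin{align*}
\int_\Omega \exp\bigl(\beta\,|I_\alpha f_\rho|^{q'}\bigr)\dn \;\geq\; c\,\rho^{\,d\,-\,\beta\,n\,\gamma(\alpha)^{-q'}\,\omega_n^{(n-\alpha)q'/n}},
\end{align*}
which diverges as $\rho\to 0^+$ precisely when $\beta>\tfrac{d}{n}\gamma(\alpha)^{q'}\omega_n^{-(n-\alpha)q'/n}$, proving sharpness.

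\textbf{Main obstacle.} The principal difficulty will be the careful bookkeeping of the sharp constant through both halves of the argument. On the upper-bound side, the Lorentz rearrangement, the Riesz kernel rearrangement, and the $d$-dimensional trace geometry must combine to give exactly $\tfrac{d}{n}\gamma(\alpha)^{q'}\omega_n^{-(n-\alpha)q'/n}$ with no spurious multiplicative slack; the one-dimensional Lorentz Hansson--Brezis--Wainger inequality must be invoked at its optimum. On the sharpness side, the crucial point is the additive asymptotic $I_\alpha f_\rho(x) = I_\alpha f_\rho(x_0) + o(1)$ for $x\in B(x_0,\rho/2)$, rather than the weaker multiplicative bound $I_\alpha f_\rho(x)\geq c\,I_\alpha f_\rho(x_0)$ with $c<1$; only the additive version yields $\beta^*$ exactly. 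Establishing it requires a quantitative comparison of $|y-x|^{\alpha-n}$ with $|y-x_0|^{\alpha-n}$ against the decaying normalization $c_\rho$, and it is here that the computation is most sensitive to the choice of test function.
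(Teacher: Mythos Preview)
Your proposal is correct and follows essentially the same route as the paper: for the upper bound, the paper reaches the one-dimensional reduction via a modified O'Neil rearrangement inequality from Fontana--Morpurgo (Lemma~\ref{oneil-lemma}) that handles the two-measure trace setting, and then applies Adams' extension of Moser's lemma (Lemma~\ref{adams-lemma}) --- note this is not a Hansson--Brezis--Wainger inequality, which is the separate logarithmic embedding treated in Theorem~\ref{traceHBW}. For sharpness your capacitary test functions coincide with the paper's up to normalization (so $\|f_\rho\|_{L^{n/\alpha,q}} = 1 + o(1)$ rather than exactly $1$), and the additive asymptotic on $B(x_0,\rho)$ is obtained there via Fuglede's explicit potential computation rather than a near-far split, but your route works equally well.
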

	Note that the preceding theorem also gives an answer to $(2)$.  Concerning $(3)$, we prove the following theorem which gives the sharp constant for the differential operators employed by Adams \cite{Adams1988} and Alberico \cite{Alberico}.
	\begin{theorem}\label{main-thm-1}
		Let $\Omega$ be an open bounded subset of $\rn$, $n\geq 2$. Let $k \in \mathbb{N}$, $1\leq k<n$, $1<q<\infty$, and set $q'=q/(q-1)$.  Let $\nu$ be any positive Borel measure on $\Omega$ which satisfies \eqref{bgc} and \eqref{condition-1} for some $d\in (0,n]$.  Then there exists positive constant $C=C(n,k,\Omega, C'_d, q)$ such that 
\begin{align}\label{eqn-adams-lorentz}
			\int_\Omega \exp \left( \beta |u(x)|^{q^\prime}\right) \dn \leq C,
\end{align}
for every $\beta \leq \beta_{n,k,q}$ (as defined in \eqref{beta-n-k}) and every $u\in W_0^k L^{\frac{n}{k},q}(\Omega)$ with $\|\, |D^k u|\, \|^q_{L^{\frac{n}{k},q}(\Omega)} \leq 1$. Moreover, the result is sharp in the sense that the l.h.s.~of \eqref{eqn-adams-lorentz}, with $\beta>\beta_{n,k,q}$, cannot be uniformly bounded for every $u\in W_0^k L^{\frac{n}{k},q}(\Omega)$ with \sloppy{$\|\, |D^k u|\, \|_{L^{\frac{n}{k},q}(\Omega)}\leq 1$}.
	\end{theorem}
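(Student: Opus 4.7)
The plan is to reduce Theorem \ref{main-thm-1} to Theorem \ref{main-thm} by invoking Adams' integral representation of $u$ in terms of $D^ku$. For every $u\in W_0^kL^{\frac{n}{k},q}(\Omega)$, extended by zero to $\mathbb{R}^n$, I would first establish the pointwise bound
\[
|u(x)|\leq I_k|D^ku|(x)
\]
with no multiplicative constant lost. For even $k$ this is the identity $u=I_k\bigl((-\Delta)^{k/2}u\bigr)$, while for odd $k$ it follows from the formula
\[
u(x)=\frac{1}{\gamma(k)}\int_{\mathbb{R}^n}\frac{(x-y)\cdot D^ku(y)}{|x-y|^{n-k+1}}\,dy
\]
after using $|(x-y)\cdot D^ku(y)|\leq |x-y|\,|D^ku(y)|$ (cf.\ \cite{Adams1988}). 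Applying Theorem \ref{main-thm} with $\alpha=k$ and $f=|D^ku|$, which satisfies $\|f\|_{L^{n/k,q}(\Omega)}\leq 1$, then immediately gives \eqref{eqn-adams-lorentz} with constant $\frac{d}{n}\gamma(k)^{q'}\omega_n^{-\frac{n-k}{n}q'}$, which I expect to be precisely the value $\beta_{n,k,q}$ from \eqref{beta-n-k}.

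For the sharpness claim, for each $\beta>\beta_{n,k,q}$ I would construct an extremizing sequence $\{u_j\}\subset W_0^kL^{\frac{n}{k},q}(\Omega)$ with $\||D^ku_j|\|_{L^{n/k,q}(\Omega)}\leq 1$ along which the left-hand side of \eqref{eqn-adams-lorentz} diverges. After translating so that the point $x_0$ from \eqref{condition-1} is the origin, I would take a radial Moser-Adams profile $u_j(x)=\psi_j(|x|)$ supported in $\Omega$, whose shape near $0$ is a suitably truncated power of $\log(1/|x|)$ adapted to the critical Lorentz scale $L^{n/k,q}$, in the spirit of the first-order constructions of \cite{Cianchi:2008,Alberico}. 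Radial symmetry permits an explicit computation of $D^ku_j$, and the parameters in $\psi_j$ are then tuned so that $\||D^ku_j|\|_{L^{n/k,q}(\Omega)}\leq 1$ with equality attained asymptotically, while the concentration of $u_j$ at the origin, combined with the mass lower bound \eqref{condition-1} on $\nu$, forces $\int_\Omega\exp(\beta|u_j|^{q'})\,d\nu\to\infty$ whenever $\beta>\beta_{n,k,q}$.

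The main obstacle lies in calibrating the Lorentz norm of $D^ku_j$ in the odd case, where $D^ku_j=\nabla\bigl[(-\Delta)^{(k-1)/2}\psi_j(|x|)\bigr]$ is vector-valued and must be computed directly from the radial profile rather than read off from any scalar identification. The sharp threshold $\beta_{n,k,q}$ can only be attained if the leading-order asymptotics of $\||D^ku_j|\|_{L^{n/k,q}}$ exactly match those dictated by the pointwise representation $|u|\leq I_k|D^ku|$. Conceptually this match is guaranteed by the fact that the representation loses no constant; technically, however, it requires an explicit asymptotic analysis against the precise form of $\psi_j$ near the origin, where the extremizer concentrates all of its critical logarithmic mass. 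This verification, together with the associated lower bound for the exponential integral obtained via \eqref{condition-1}, is the technical heart of the sharpness argument.
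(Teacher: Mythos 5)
Your reduction to Theorem \ref{main-thm} via a pointwise representation is exactly the paper's strategy, but your odd-$k$ representation formula carries the wrong constant, and this is not a cosmetic slip: Adams' formula (recorded in \eqref{derv-rep}) reads $u(x)=\frac{1}{\widetilde{\gamma}(k-1)}\int_{\mathbb{R}^n}|x-y|^{k-1-n}(x-y)\cdot D^ku(y)\,dy$ for $k$ odd, with normalizing constant $\widetilde{\gamma}(k-1)$, not $\gamma(k)$. Consequently the correct pointwise bound is $|u(x)|\leq \frac{\gamma(k)}{\widetilde{\gamma}(k-1)}\,I_k\bigl(|D^ku|\bigr)(x)$ as in \eqref{pt-wise}; your claim that $|u|\leq I_k|D^ku|$ ``with no multiplicative constant lost'' is false in general (for $k=1$, $n=3$ the ratio $\gamma(1)/\widetilde{\gamma}(0)=\pi/2>1$), and since $\gamma(k)\neq\widetilde{\gamma}(k-1)$ the constant you obtain from Theorem \ref{main-thm} does not agree with $\beta_{n,k,q}$ as defined in \eqref{beta-n-k}, which for odd $k$ is built from $\widetilde{\gamma}(k-1)$ precisely because of this representation constant. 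Depending on the sign of $\gamma(k)-\widetilde{\gamma}(k-1)$ your argument either asserts an exponential inequality with a constant strictly larger than the sharp one (contradicted by the sharpness half of the theorem) or falls short of the claimed threshold; either way the odd case of the positive direction is broken. The even case, where $u=I_k\bigl((-\Delta)^{k/2}u\bigr)$, is fine and matches the paper.

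For sharpness, your plan (radial profiles shaped like truncated powers of $\log(1/|x|)$ with parameters to be tuned, in the spirit of Cianchi and Alberico) is only a sketch, and the calibration you yourself identify as the technical heart --- the leading-order Lorentz asymptotics of $\||D^ku_j|\|_{L^{n/k,q}}$, especially for odd $k$ --- is left open; moreover, because your target constant for odd $k$ is miscalibrated ($\gamma(k)$ in place of $\widetilde{\gamma}(k-1)$), no tuning could make the construction match \eqref{beta-n-k}. The paper avoids all of this by reusing the test functions of \cite{SS} (a smooth cutoff times $\log(1/|x|)$, as in the proof of Theorem \ref{main-thm-0}) together with the exact identities \eqref{adams} and \eqref{potential} of Lemma \ref{log_constants}, which give $|D^k\log|x||$ explicitly as a constant multiple of $|x|^{-k}$; the Lorentz norm is then computed directly on each piece of the domain via Lemma \ref{lor-break}, and the divergence of the exponential integral follows from the lower bound \eqref{condition-1} on an annulus $B_{m\epsilon}\setminus B_\epsilon$. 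You should replace your odd-$k$ representation by \eqref{derv-rep}--\eqref{pt-wise} and either carry out your calibration in full or adopt the explicit logarithmic test functions and Lemma \ref{log_constants}.
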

	
	As asserted in Theorems \ref{main-thm-0},  \ref{main-thm}, and  \ref{main-thm-1}, the constants are sharp in the sense that the coefficient multiplying $u$ may not be replaced by any larger number.  We next turn our attention to the endpoint $q=+\infty$.  Here, as in Cianchi's work, we establish the validity of the inequality on an open interval.  	For the higher order case involving the Riesz potential, we are able to prove the complete analogue, as we demonstrate a failure at the endpoint via capacitary test functions.  This gives a complete picture in the following analogue of Theorem \ref{main-thm}.	
	\begin{theorem}\label{main-thm-infty}
	Let $\Omega$ be an open bounded subset of $\rn$, $n\geq 2$. Let $\alpha \in (0,n)$.  Let $\nu$ be any positive Borel measure on $\Omega$ which satisfies \eqref{bgc} and \eqref{condition-1} for some $d\in (0,n]$.  Let $\gamma<\left(\frac{d}{n}\right)\gamma(\alpha)\,\omega_n^{-\frac{n-\alpha}{n}}$.  Then there exists positive constant $C=C(n,\alpha,\Omega, C'_d,\gamma)$ such that 
\begin{align}\label{eqn-main-thm-infty}
			\int_\Omega \exp \left(\gamma |I_\alpha f(x)| \right) \dn \leq C.
\end{align}
for every $f\in L^{\frac{n}{\alpha},\infty}(\Omega)$ with $||\, |f|\, ||_{L^{\frac{n}{\alpha},\infty}(\Omega)}\leq 1$.  Moreover, the result is sharp in the sense that the l.h.s.~of \eqref{eqn-main-thm-infty}, with $\gamma\geq\left(\frac{d}{n}\right)\gamma(\alpha)\,\omega_n^{-\frac{n-\alpha}{n}}$, cannot be uniformly bounded for every $f\in L^{\frac{n}{\alpha},\infty}(\Omega)$ with $||\, |f|\, ||_{L^{\frac{n}{\alpha},\infty}(\Omega)}\leq 1$.
	\end{theorem}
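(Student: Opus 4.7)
I would combine a sharp Lebesgue rearrangement estimate for $I_\alpha$ on $L^{n/\alpha,\infty}(\Omega)$ with the Adams-Fontana-Morpurgo transfer to the measure $\nu$ under \eqref{bgc}, and then use Adams' capacitary test functions \cite{Adams1988} to witness sharpness. Throughout, let $\gamma_0 := (d/n)\gamma(\alpha)\omega_n^{-(n-\alpha)/n}$ denote the threshold.

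\emph{Upper bound.} The hypothesis $\||f|\|_{L^{n/\alpha,\infty}(\Omega)}\leq 1$ is equivalent to the pointwise bound $f^*(s)\leq s^{-\alpha/n}$ on the decreasing rearrangement. I would apply O'Neil's rearrangement inequality to the convolution $I_\alpha f$, using the identity $(|\cdot|^{\alpha-n})^*(t)=\omega_n^{(n-\alpha)/n}t^{-(n-\alpha)/n}$; inserting the weak-type bound on $f^*$ into the tail integral of O'Neil gives
\[
(I_\alpha f)^*(t)\leq \gamma(\alpha)^{-1}\omega_n^{(n-\alpha)/n}\log(|\Omega|/t)+C_1,\qquad 0<t\leq|\Omega|,
\]
with the diagonal O'Neil contribution $t f^{**}(t)g^{**}(t)$ absorbed into $C_1$. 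I would then upgrade this to the analogous bound on the $\nu$-rearrangement by the Fontana-Morpurgo transfer \cite{FontanaMorpurgo} based on \eqref{bgc}, which in the present critical regime effectively substitutes $t\mapsto c\,s^{n/d}$, delivering
\[
(I_\alpha f)^*_\nu(s)\leq \tfrac{n}{d}\gamma(\alpha)^{-1}\omega_n^{(n-\alpha)/n}\log(C/s)+C_2.
\]
The layer-cake identity against $\nu$ then yields convergence of the left-hand side of \eqref{eqn-main-thm-infty} for every $\gamma<\gamma_0$.

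\emph{Sharpness.} Let $x_0\in\Omega$ and $\varrho_0>0$ be as in \eqref{condition-1}. For $0<\eps<R\leq\varrho_0$, set
\[
f_\eps(y):=\omega_n^{-\alpha/n}\,|y-x_0|^{-\alpha}\,\chi_{\{\eps<|y-x_0|<R\}}(y);
\]
the radial decreasing structure of $f_\eps$ immediately yields $\||f_\eps|\|_{L^{n/\alpha,\infty}(\Omega)}\leq 1$. Splitting the integral defining $I_\alpha f_\eps(x)$, for $\eps<|x-x_0|<R$, into the annuli $\{\eps<|y-x_0|<|x-x_0|\}$ and $\{|x-x_0|<|y-x_0|<R\}$ and evaluating each with the natural approximations $|x-y|\approx|x-x_0|$ and $|x-y|\approx|y-x_0|$ respectively produces
\[
I_\alpha f_\eps(x)\geq \frac{n\,\omega_n^{(n-\alpha)/n}}{\gamma(\alpha)}\log\!\bigl(R/|x-x_0|\bigr)-C_3.
\]
Inserting this lower bound into \eqref{eqn-main-thm-infty} and invoking \eqref{condition-1} gives
\[
\int_\Omega\exp(\gamma I_\alpha f_\eps)\,d\nu\geq C\int_\eps^R r^{d-1-\gamma n\omega_n^{(n-\alpha)/n}/\gamma(\alpha)}\,dr,
\]
which diverges as $\eps\downarrow 0$ exactly when $\gamma\geq\gamma_0$, forbidding any uniform bound at or above the threshold.

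The delicate step is the Lebesgue-to-$\nu$ transfer in the upper bound: one must arrange for the factor $n/d$ to enter exactly once into the coefficient of the logarithm, with no spurious contamination from the O'Neil or substitution remainders. The sharpness computation is then largely bookkeeping, since divergence is driven purely by the leading logarithmic term and the $O(1)$ error $C_3$ is harmless because $\log(R/|x-x_0|)\to\infty$ as $\eps\downarrow 0$.
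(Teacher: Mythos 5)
Your plan is essentially the paper's: the upper bound comes from the weak-type bound $f^*(s)\le s^{-\alpha/n}$ fed into an O'Neil-type rearrangement inequality adapted to $\nu$, and sharpness comes from power-function test functions whose Riesz potential is logarithmic, combined with \eqref{condition-1}. Two points need correcting, one of substance and one of bookkeeping.

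First, the upper bound cannot be organized as ``prove a Lebesgue-rearrangement bound for $I_\alpha f$, then transfer it to the $\nu$-rearrangement.'' No such transfer exists at the level of rearrangements alone: $\nu$ may charge Lebesgue-null or very small sets (e.g.\ $\nu=\mathcal{H}^d$ restricted to a $d$-plane), so a bound on the Lebesgue rearrangement of $I_\alpha f$ gives no control whatsoever on $(I_\alpha f)^*_\nu$. What Fontana--Morpurgo actually supply, and what the paper uses (Lemma \ref{oneil-lemma}), is a two-measure O'Neil inequality in which the kernel is rearranged in the $x$-variable with respect to $\nu$; the ball growth condition \eqref{bgc} gives $k_2^*(t)\le \gamma(\alpha)^{-1}(C_d')^{\frac{n-\alpha}{d}}t^{-\frac{n-\alpha}{d}}$, and choosing $\tau=t^{n/d}$ in \eqref{eqn-oneil-lemma-rearrangement} together with $f^*(s)\le s^{-\alpha/n}$ yields directly
\[
(I_\alpha f)^{*}_\nu(t)\;\le\; A+\frac{n}{d\,\gamma(\alpha)}\,\omega_n^{\frac{n-\alpha}{n}}\log\frac{1}{t},
\]
after which $\int_0^{\nu(\Omega)}\exp\left(\gamma\,(I_\alpha f)^*_\nu(t)\right)dt<\infty$ for every $\gamma<\gamma_0:=\frac{d}{n}\gamma(\alpha)\omega_n^{-\frac{n-\alpha}{n}}$. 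Thus the factor $n/d$ enters through the $\nu$-side kernel rearrangement, not through a substitution applied to a previously established Lebesgue estimate; as literally described, your ``transfer'' step is a gap, though the tool you cite does close it once invoked in its correct two-measure form.

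Second, in the sharpness argument the step $\int_\Omega \exp(\gamma I_\alpha f_\eps)\,d\nu\ge C\int_\eps^R r^{d-1-\kappa}\,dr$ (with $\kappa=\gamma n\omega_n^{(n-\alpha)/n}/\gamma(\alpha)$) treats $\nu$ as if it had radial density bounded below by $r^{d-1}$, which \eqref{condition-1} does not give pointwise; it should be run through the layer-cake formula as in the paper, $\int_{B_r}|x-x_0|^{-d}\,d\nu=\int_{r^{-d}}^{\infty}\nu(\{|x-x_0|<t^{-1/d}\})\,dt\ge C_0\int_{r^{-d}}^{\infty}t^{-1}\,dt$, carried out at the endpoint $\kappa=d$ (subtracting $\nu(B_\eps)\le C_d'\eps^d$ for your truncated annulus, which is harmless). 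Unboundedness for $\gamma>\gamma_0$ then follows from the endpoint case by monotonicity of the exponential, so the ``exactly when $\gamma\ge\gamma_0$'' claim need not be verified for large $\kappa$, where the naive radial bound can be spoiled by the competition between $C_0$ and $C_d'$. For comparison, the paper uses the untruncated function $f=\chi_B|x|^{-\alpha}/(n\omega_n)$, normalized via Lemma \ref{weak-type_norm}, for which the endpoint integral is already $+\infty$ for a single $f$; your annular family reaches the same conclusion after the above adjustments.
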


For the higher order gradient we have the following analogue of Theorem \ref{main-thm-0}.

\begin{theorem}\label{main-thm-infty-1}
		Let $\Omega$ be an open bounded subset of $\rn$, $n\geq 2$. Let $k \in \mathbb{N}$, $1\leq k<n$.  Let $\nu$ be any positive Borel measure on $\Omega$ which satisfies \eqref{bgc} and \eqref{condition-1} for some $d\in (0,n]$.  Let $\gamma< d \,\omega_n^{\frac{k}{n}}\,\sqrt{\ell^k_n}$.  Then there exists positive constant $C=C(n,k,\Omega, C'_d, \gamma)$ such that 
\begin{align}\label{eqn-ss-lorentz-infty}
\int_\Omega \exp \left( \gamma|u(x)|\right) \dn \leq C,
\end{align}
for every $u\in W_0^k L^{\frac{n}{k},\infty}(\Omega)$ with $||\, |\nabla^k u|\, ||_{L^{\frac{n}{k},\infty}(\Omega)}\leq 1$. Moreover, the result is sharp in the sense that the l.h.s.~of \eqref{eqn-ss-lorentz-infty}, with $\gamma> d \,\omega_n^{\frac{k}{n}}\,\sqrt{\ell^k_n}$, cannot be uniformly bounded for every $u\in W_0^k L^{\frac{n}{k},\infty}(\Omega)$ with $||\, |\nabla^k u|\, ||_{L^{\frac{n}{k},\infty}(\Omega)}\leq 1$.
\end{theorem}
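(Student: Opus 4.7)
The plan is to obtain Theorem~\ref{main-thm-infty-1} by mirroring, at the $q=\infty$ endpoint, the reduction that yields Theorem~\ref{main-thm-0} from Theorem~\ref{main-thm}: namely, pass from higher-order gradients to Riesz potentials via the sharp Shafrir--Spector representation, and then apply the endpoint Riesz estimate Theorem~\ref{main-thm-infty}.

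For the upper bound, given $u\in W_0^k L^{n/k,\infty}(\Omega)$ with $\||\nabla^k u|\|_{L^{n/k,\infty}(\Omega)}\le1$, I would invoke the sharp pointwise bound from \cite{SS},
\begin{align*}
|u(x)| \;\le\; \frac{\gamma(k)}{n\,\omega_n\,\sqrt{\ell^k_n}}\; I_k|\nabla^k u|(x),
\end{align*}
which is the sharp form of \eqref{representation_inequality} for $\alpha=k$. Setting $f:=|\nabla^k u|$ and $\gamma':=\gamma\,\gamma(k)/(n\,\omega_n\,\sqrt{\ell^k_n})$, the hypothesis $\gamma< d\,\omega_n^{k/n}\sqrt{\ell^k_n}$ is equivalent, after a short algebraic manipulation, to $\gamma'<(d/n)\gamma(k)\,\omega_n^{-(n-\alpha)/n}$ with $\alpha=k$. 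Thus Theorem~\ref{main-thm-infty} applies to $f$, and a direct comparison of integrands yields \eqref{eqn-ss-lorentz-infty}.

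For the sharpness, I would adapt the logarithmic test function used in \cite{Shafrir,SS}. Pick the non-degeneracy point $x_0$ from \eqref{condition-1}, a radius $\varrho\in(0,\varrho_0]$ with $B(x_0,\varrho)\subset\Omega$, and a smooth radial cutoff $\phi$ with $\phi\equiv 1$ on $B(x_0,\varrho/2)$ and $\supp\phi\subset B(x_0,\varrho)$. Consider
\begin{align*}
u(x) \;:=\; -\,\phi(x)\,\log\!\frac{|x-x_0|}{\varrho}.
\end{align*}
On $B(x_0,\varrho/2)$ where $\phi\equiv1$, the identity \eqref{combinatorial_constant} gives $|\nabla^k u(x)|=\sqrt{\ell^k_n}/|x-x_0|^k$; a direct distribution-function computation shows this singular part has $L^{n/k,\infty}$-norm equal to $\sqrt{\ell^k_n}\,\omega_n^{k/n}$, while the commutator terms from derivatives of $\phi$ are supported in the annulus $B(x_0,\varrho)\setminus B(x_0,\varrho/2)$, bounded, and have bounded $L^{n/k,\infty}$-norm. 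After normalizing by the resulting Lorentz norm one obtains an admissible $\tilde u$, and on the central ball
\begin{align*}
\exp\!\bigl(\gamma\,|\tilde u(x)|\bigr) \;=\; \bigl(\varrho/|x-x_0|\bigr)^{\lambda}, \qquad \lambda:=\gamma/\||\nabla^k u|\|_{L^{n/k,\infty}}.
\end{align*}
A layer-cake computation against the lower bound $\nu(B(x_0,r)\cap\Omega)\ge C_0 r^d$ from \eqref{condition-1} shows the integral diverges as soon as $\lambda>d$, i.e.~once $\gamma$ exceeds the prescribed threshold $d\,\omega_n^{k/n}\sqrt{\ell^k_n}$.

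The main obstacle I anticipate is the usual one in the Lorentz setting: making the singular-part computation yield the \emph{exact} value $\sqrt{\ell^k_n}\,\omega_n^{k/n}$ for $\||\nabla^k u|\|_{L^{n/k,\infty}}$, since the quasi-norm does not enjoy a sharp triangle inequality that would directly separate the singular contribution from the cutoff-generated commutators. To circumvent this, I plan to use a family of test functions $u_j$ where either the cutoff annulus is shrunk or a capacitary-type perturbation is used to dominate the commutator terms by a quantity which is negligible compared with the singular part in the $L^{n/k,\infty}$ scale; combined with the slack afforded by the strict inequality $\gamma> d\,\omega_n^{k/n}\sqrt{\ell^k_n}$, this will rigorously drive $\lambda$ past $d$ along the sequence and produce the blow-up.
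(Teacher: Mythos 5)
Your positive direction is exactly the paper's argument: the sharp pointwise bound \eqref{pt-wise-org} turns the hypothesis $\gamma< d\,\omega_n^{k/n}\sqrt{\ell^k_n}$ into the strictly subcritical condition of Theorem \ref{main-thm-infty} applied to $f=|\nabla^k u|$, and nothing more is needed there.

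The sharpness half, however, has a genuine gap. You correctly identify the obstruction: for a fixed cutoff $\phi$ times $\log(\varrho/|x-x_0|)$, the commutator terms contribute an error to $\|\,|\nabla^k u|\,\|_{L^{n/k,\infty}}$ that is $O(1)$ rather than $o(1)$, and at $q=\infty$ the main term's norm is the finite constant $\omega_n^{k/n}\sqrt{\ell^k_n}$, so this error cannot be absorbed (unlike $q<\infty$, where the main term grows like $(\log(1/\epsilon))^{1/q}$). But your proposed remedies do not close it. Shrinking the transition annulus to width $\delta$ makes matters worse for $k\ge 2$: the dominant commutator term is of size of order $\delta^{-(k-1)}\varrho^{-1}$ on a set of measure of order $\varrho^{n-1}\delta$, so its weak-norm contribution scales like $\delta^{1-k(n-1)/n}$, which blows up as $\delta\to0$ once $k\ge2$ (it helps only for $k=1$); shrinking $\varrho$ itself does nothing since $\|\,|\nabla^k u|\,\|_{L^{n/k,\infty}}$ is dilation invariant; and the ``capacitary-type perturbation'' is left unspecified, so the family $u_j$ with norms tending to $\omega_n^{k/n}\sqrt{\ell^k_n}$ --- which is the heart of the matter --- is never actually constructed. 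The paper supplies exactly this missing construction via Alberico's test functions $u_a(x)=\varphi\bigl(\log(1/|x|)/\log a\bigr)$, with $\varphi$ smooth, $|\varphi'|\le 1$, $\varphi(t)=t-1/2$ for $t\ge1$: because the cutoff is performed at logarithmic scale, every term of $\nabla^k u_a$ produced by the chain rule is pointwise dominated by $\frac{\sqrt{\ell^k_n}}{|x|^k\log a}\bigl(1+\frac{C}{\log a}\bigr)$, so Lemma \ref{weak-type_norm} bounds the weak norm by $\frac{\omega_n^{k/n}\sqrt{\ell^k_n}}{\log a}(1+o(1))$ with no splitting or triangle inequality at all; since $u_a=\frac{\log(1/|x|)}{\log a}-\frac12$ on $B_{1/a}$, the normalized ratio behaves like $\log(1/|x|)/(\omega_n^{k/n}\sqrt{\ell^k_n})$ there, and the layer-cake estimate against \eqref{condition-1} makes $\int_{B_{1/a}}|x|^{-\gamma_a}\,{\rm d}\nu$ diverge once $\gamma>d\,\omega_n^{k/n}\sqrt{\ell^k_n}$ and $a$ is large. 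If you wish to keep your localized formulation, the fix is the same idea: replace the fixed cutoff by one of the form $\eta\bigl(\log(\varrho/|x-x_0|)/\log A\bigr)$ and let $A\to\infty$.
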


	Finally, for Adams' derivative $D^ku$ we have the following analogue of Theorem \ref{main-thm-1}.	
	
	\begin{theorem}\label{main-thm-infty-2}
			Let $\Omega$ be an open bounded subset of $\rn$, $n\geq 2$. Let $k \in \mathbb{N}$, $1\leq k<n$.  Let $\nu$ be any positive Borel measure on $\Omega$ which satisfies \eqref{bgc} and \eqref{condition-1} for some $d\in (0,n]$.  Let $\gamma<\beta_{n,k,\infty}$ for $\beta_{n,k,\infty}$ as defined in \eqref{beta-n-k}.  Then there exists positive constant $C=C(n,k,\Omega, C'_d, \gamma)$ such that 
\begin{align}\label{eqn-adams-lorentz-infty}
\int_\Omega \exp \left( \gamma|u(x)|\right) \dn \leq C,
\end{align}
for every $u\in W_0^k L^{\frac{n}{k},\infty}(\Omega)$ with $||\, |D^k u|\, ||_{L^{\frac{n}{k},\infty}(\Omega)} \leq 1$. Moreover, the result is sharp in the sense that the l.h.s.~of \eqref{eqn-adams-lorentz-infty}, with $\gamma>\beta_{n,k,\infty}$, cannot be uniformly bounded for every $u\in W_0^k L^{\frac{n}{k},\infty}(\Omega)$ with $||\, |D^k u|\, ||_{L^{\frac{n}{k},\infty}(\Omega)}\leq 1$.
\end{theorem}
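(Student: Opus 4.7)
The plan is to derive Theorem \ref{main-thm-infty-2} from Theorem \ref{main-thm-infty} by means of a sharp pointwise representation of $u$ by a Riesz potential of $D^ku$, in exact parallel with how Theorem \ref{main-thm-1} is obtained from Theorem \ref{main-thm} in the interior range $1<q<\infty$. The key input is the pointwise inequality
\begin{align*}
|u(x)| \leq C_{n,k}\, I_k |D^ku|(x), \qquad x \in \Omega,
\end{align*}
for every $u \in W_0^k L^{\frac{n}{k},\infty}(\Omega)$, extended by zero to $\rn$. The constant $C_{n,k}$ comes from the specific normalization of $D^k$ relative to $(-\Delta)^{k/2}$: for $k$ even, the identity $u = I_k D^k u$ gives $C_{n,k}=1$, while for $k$ odd one uses the vector-valued convolution identity whose kernel is the Fourier inverse of $-i\xi/|\xi|^{k+1}$, proportional to $(y-x)/|y-x|^{n-k+1}$. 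The constant $\beta_{n,k,\infty}$ defined in \eqref{beta-n-k} is then forced to equal $\frac{d}{n}\gamma(k)\,\omega_n^{-(n-k)/n}/C_{n,k}$, which will be precisely the compatibility scaling with Theorem \ref{main-thm-infty} at $\alpha=k$.

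The positive direction follows immediately: set $f := |D^ku|$, so that $\|f\|_{L^{\frac{n}{k},\infty}(\Omega)}\leq 1$; for every $\gamma<\beta_{n,k,\infty}$ the inequality $\gamma C_{n,k}<\frac{d}{n}\gamma(k)\omega_n^{-(n-k)/n}$ is satisfied, and Theorem \ref{main-thm-infty} applied to $f$ with $\alpha = k$ yields
\begin{align*}
\int_\Omega \exp(\gamma|u(x)|) \dn \leq \int_\Omega \exp(\gamma\, C_{n,k}\, I_k f(x)) \dn \leq C,
\end{align*}
as desired.

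For the sharpness assertion, I would follow the strategy of \cite{SS} using logarithmic test functions. Choose $x_0 \in \Omega$ as in \eqref{condition-1} and $R$ small enough that $B(x_0, R) \subset \Omega$, and set
\begin{align*}
u_\rho(x) := \eta(x)\,\log\frac{R}{\max(|x-x_0|,\rho)},
\end{align*}
where $\eta \in C_c^\infty(B(x_0, R))$ equals $1$ on $B(x_0, R/2)$ (and where the $\max$-cusp can be replaced by a smooth interpolant to enforce $u_\rho \in W_0^k$). On the annulus $\{\rho < |x-x_0| < R/2\}$ one computes $|D^ku_\rho|$ explicitly, and a direct calculation of the distribution function shows $\|D^ku_\rho\|_{L^{\frac{n}{k},\infty}(\Omega)}$ equals a computable constant depending only on $n,k$ up to a $o(1)$ correction from the cutoff, while $u_\rho \gtrsim \log(R/\rho)$ uniformly on $B(x_0, \rho)$. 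After the appropriate normalization, combining with $\nu(B(x_0, \rho)) \geq C_0 \rho^d$ yields
\begin{align*}
\int_\Omega \exp(\gamma|u_\rho|) \dn \gtrsim \rho^{\,d(1 - \gamma/\beta_{n,k,\infty})},
\end{align*}
which diverges as $\rho\to 0^+$ precisely when $\gamma > \beta_{n,k,\infty}$.

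The main obstacle I anticipate is the precise computation of $\|D^ku_\rho\|_{L^{\frac{n}{k},\infty}(\Omega)}$ with the correct normalizing constant matching the definition of $\beta_{n,k,\infty}$ in \eqref{beta-n-k}, since one must track the contribution of the smooth cutoff $\eta$ (via a Leibniz expansion of $D^k(\eta v)$) and control the lower-order derivatives of $\log(R/|x-x_0|)$ on $\supp \nabla\eta$ in a Lorentz norm. A secondary subtlety is the justification of the representation inequality at the weak Lorentz endpoint $q=\infty$: smooth compactly supported functions are not dense in $L^{\frac{n}{k},\infty}(\Omega)$, so one cannot simply approximate as in the $1<q<\infty$ case, and the identity must be proved directly by integration-by-parts on smooth regularizations in regions where $u$ is already smooth, along the lines of the arguments used to establish Theorems \ref{main-thm-infty} and \ref{main-thm-infty-1}.
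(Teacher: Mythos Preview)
Your positive direction is exactly the paper's argument: the pointwise bound \eqref{pt-wise} reduces \eqref{eqn-adams-lorentz-infty} to Theorem \ref{main-thm-infty} at $\alpha=k$, with the parity-dependent constant $C_{n,k}$ absorbing the discrepancy between $\gamma(k)$ and $\widetilde{\gamma}(k-1)$. No issue there.

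The sharpness argument, however, has a real gap. Your test functions $u_\rho=\eta\,\log\bigl(R/\max(|x-x_0|,\rho)\bigr)$ are the analogue of the Shafrir--Spector functions used for $1<q<\infty$ in Theorems \ref{main-thm-0} and \ref{main-thm-1}. In that regime the main annulus contributes $\|\,|D^k u_\rho|\,\|_{L^{n/k,q}}\sim c\,(\log(1/\rho))^{1/q}$, while the cutoff and the cusp-smoothing each contribute an additive $O(1)$; since $(\log(1/\rho))^{1/q}\to\infty$, the $O(1)$ is genuinely $o(\text{main term})$. At $q=\infty$ this fails: the main annulus now gives the \emph{constant} $c\,\omega_n^{k/n}$ (Lemma \ref{weak-type_norm}), and the cutoff and smoothing contributions are each $O(1)$ quantities that do \emph{not} shrink with $\rho$. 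Via the sub-additivity of Lemma \ref{lor-break} you only get $\|D^k u_\rho\|_{L^{n/k,\infty}}\le c\,\omega_n^{k/n}+O(1)$, and a direct analysis of the distribution function confirms that the sup in the weak norm is attained (or nearly so) at levels $t\sim c/\rho^k$ where the smoothing region is active, yielding a value strictly larger than $c\,\omega_n^{k/n}$ in general. Your divergence estimate then only kicks in for $\gamma>d\bigl(c\,\omega_n^{k/n}+O(1)\bigr)$, which is strictly above $\beta_{n,k,\infty}$.

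The paper avoids this by switching to Alberico's test functions $u_a(x)=\varphi\bigl(\log(1/|x|)/\log a\bigr)$ with $\varphi$ smooth, increasing, $|\varphi'|\le 1$, $\varphi(t)=0$ for $t\le 0$ and $\varphi(t)=t-\tfrac12$ for $t\ge 1$. The point is that the chain rule forces every term of $D^k u_a$ to carry at least one factor of $1/\log a$, so that on the \emph{entire} ball one has the \emph{pointwise} bound
\[
|D^k u_a(x)|\le \frac{1}{\log a}\,\frac{c}{|x|^k}\Bigl(1+\tfrac{C}{(\log a)^2}\Bigr),
\]
with $c=\widetilde{\gamma}(k-1)/(n\omega_n)$ (odd) or $\gamma(k)/(n\omega_n)$ (even) from Lemma \ref{log_constants}. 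Lemma \ref{weak-type_norm} then gives $\|D^k u_a\|_{L^{n/k,\infty}}\le c\,\omega_n^{k/n}(1+o(1))/\log a$, i.e.\ the error is now \emph{multiplicative} $(1+o(1))$ rather than additive. Combined with $u_a(x)=\log(1/|x|)/\log a-\tfrac12$ on $B_{1/a}$ and \eqref{condition-1}, this yields $\int_{B_{1/a}}|x|^{-\gamma_a}\,d\nu=+\infty$ once $\gamma_a:=\gamma/\bigl(c\,\omega_n^{k/n}(1+o(1))\bigr)>d$, which holds for $a$ large whenever $\gamma>\beta_{n,k,\infty}$.

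Your density concern at $q=\infty$ is a fair technical point, but it does not bear on the sharpness side (the $u_a$ are smooth), and for the positive direction the representation \eqref{derv-rep} is an identity for $C_c^\infty$ that extends to $W_0^kL^{n/k,\infty}$ by mollification, since both sides are continuous under convergence of $D^ku$ in $L^{n/k,r}$ for any finite $r$ (and $L^{n/k,\infty}\hookrightarrow L^{n/k,r}_{\mathrm{loc}}$ on bounded $\Omega$ for $r>n/k$ fails, but one can instead use the a priori membership $|D^ku|\in L^1(\Omega)$ to justify the convolution formula directly).
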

	
One observes that in Theorems \ref{main-thm-infty-1} and  \ref{main-thm-infty-2}, in contrast to the case $q<+\infty$ for any of the differential or integral operators, or $q=+\infty$ for the Riesz potential, the validity of the inequality at the endpoint is not settled.  In particular, while our Theorem \ref{main-thm-infty} for the Riesz potential shows a failure at the endpoint which is in agreement with Cianchi's establishment of the failure of the inequality at the endpoint in the first order case regime \cite[Theorem 2.2 on p.~2010]{Cianchi:2008}, the test functions we utilize in the higher order differential case are not sufficient to settle the endpoint.  This motivates
\begin{openquestion}
Can one construct test functions which show the failure of Theorems \ref{main-thm-infty-1} and  \ref{main-thm-infty-2} at their unresolved endpoints, $\gamma=d \,\omega_n^{\frac{k}{n}}\,\sqrt{\ell^k_n}$ and $\gamma=\beta_{n,k,\infty}$, respectively?
\end{openquestion}
This question of the endpoint behavior for these limiting inequalities is related to an interesting underlying phenomena.  In particular, for $q=+\infty$ there is a difference in character of the embedding, as in contrast to the case $q<+\infty$, the usual test function of a cutoff function\footnote{The cutoff function is to ensure compact support in the domain, and handling this error or finding another mechanism is what must be done to establish failure of the endpoint.} times $\log |x|$ is an element of the space $W_0^k L^{\frac{n}{k},\infty}(\Omega)$.  For $q<+\infty$, the typical element of $W_0^k L^{\frac{n}{k},q}(\Omega)$ is a cutoff function times $\log (2+\log|x|)$, which has slightly better behavior.  This alternative direction of improvement to exponential integrability for functions in critical Sobolev spaces has been observed in the literature in the work of K. Hansson \cite[Eqn 3.13 on p.~101]{Hansson} and H. Brezis and S. Wainger \cite[Theorem 2 on p.~781]{BW}, where the relevant functional inequality is the Hansson-Brezis-Wainger embedding:  Let $1<q<+\infty$.  There exists a constant $C>0$ such that
\begin{align}\label{HBW_Lebesgue_Riesz}
\left(\int_{0}^{|\Omega|}\left(\frac{(I_\alpha f)^*(t)}{1+\log\left(\frac{|\Omega|}{t}\right)}\right)^q\;\frac{dt}{t}\right)^{\frac{1}{q}} \leq C\|f\|_{L^{n/\alpha,q}(\Omega)}
\end{align}
for every $f\in L^{\frac{n}{\alpha},q}(\Omega)$.  Here for a measurable function $g$, $g^*$ is the usual non-increasing rearrangement of $g$, see e.g. \cite[Definition 1.4.1 on p.~44]{grafakos}, i.e., for any $t\in (0,\infty)$, we have
	\begin{align}\label{nonincreasing_rearrangement}
		g^*(t)=\inf\{\lambda>0; \, |\{x\in \Omega; \, |g(x)|>\lambda\}|\leq t\}.
	\end{align}
When one takes into account \eqref{representation_inequality}, this translates to an estimate for Sobolev functions as
\begin{align}\label{HBW_Lebesgue_Sobolev}
\left(\int_{0}^{|\Omega|}\left(\frac{u^*(t)}{1+\log\left(\frac{|\Omega|}{t}\right)}\right)^q\;\frac{dt}{t}\right)^{\frac{1}{q}} \leq C'\|\nabla^k u\|_{L^{n/\alpha,q}(\Omega)}
\end{align}
for every $u\in W_0^k L^{\frac{n}{k},q}(\Omega)$.  

We do not know of any work to establish the optimal constant in the inequalities \eqref{HBW_Lebesgue_Riesz} and \eqref{HBW_Lebesgue_Sobolev}.  Yet given our work on extending the optimal constants in the inequalities \eqref{critical} to the trace embeddings \eqref{critical_trace}, it is natural to ask whether there is a trace improvement of the Hansson-Brezis-Wainger embedding.  Indeed, we here prove such an inequality in
	\begin{theorem}[Trace Hansson-Brezis-Wainger]\label{traceHBW}
Let $\Omega$ be an open bounded subset of $\rn$, $n\geq 2$. Let $\alpha \in (0,n)$.  Let $\nu$ be any positive Borel measure on $\Omega$ which satisfies \eqref{bgc} for some $d\in (0,n]$.  Then there exists positive constant $C=C(n,\alpha,\Omega, C'_d, q)$ such that 
\begin{align}\label{bw-eq}
			\left(	\int_{0}^{\nu(\Omega)}\frac{\left((I_\alpha f)_{\nu}^{*}(t)\right)^q}{\left(1+|\log \frac{t}{\nu(\Omega)}|\right)^q}\frac{\dt}{t}\right)^{\frac{1}{q}} \leq C\|f\|_{L^{\frac{n}{\alpha},q}(\Omega)}
		\end{align}	
for every $f\in L^{\frac{n}{\alpha},q}(\Omega)$.
\end{theorem}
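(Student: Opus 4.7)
The plan is to mirror the classical Hansson--Brezis--Wainger strategy in the trace setting: derive a pointwise rearrangement estimate for $I_\alpha f$ in the spirit of O'Neil, then reduce the weighted norm on the left-hand side of \eqref{bw-eq} to a pair of Hardy-type inequalities on the half-line. The only genuine novelty is to correctly couple the ball growth condition \eqref{bgc} to a change of scale between Lebesgue and $\nu$-rearrangements.

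The central step is to establish a trace O'Neil inequality of the form
\begin{equation*}
(I_\alpha f)_\nu^*(t) \leq C_1\, t^{(\alpha - n)/d} \int_0^{c_n t^{n/d}} f^*(s)\ds + C_2 \int_{c_n t^{n/d}}^{|\Omega|} s^{\alpha/n - 1} f^*(s)\ds,
\end{equation*}
valid for $t \in (0, \nu(\Omega))$ with constants depending on $n, \alpha, C'_d$. Following an Adams-style truncation, split $f = f_1 + f_2$ at the level $f^*(c_n t^{n/d})$. The contribution $I_\alpha f_2$ is bounded uniformly in $x$ by the tail term using the Hardy--Littlewood rearrangement inequality applied to $|x-y|^{\alpha - n}$. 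The near contribution $I_\alpha f_1$ is controlled via a Vitali covering of its level sets, where \eqref{bgc} converts the ambient Lebesgue $n$-dimensional scale into the $\nu$-dimensional scale $d$; the pairing $r = t^{1/d}$ matches ball radius to $\nu$-measure $t$. This is the trace counterpart of the covering argument behind the Adams and Fontana--Morpurgo exponential inequalities.

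Given the estimate above, substitute into \eqref{bw-eq} and perform the change of variables $\sigma = c_n t^{n/d}$. One computes $\dt/t = (d/n)\,d\sigma/\sigma$, $t^{(\alpha - n)/d} = c_n^{(n - \alpha)/n}\, \sigma^{\alpha/n - 1}$, and $1 + |\log(t/\nu(\Omega))| \approx 1 + |\log(\sigma/|\Omega|)|$ up to multiplicative constants (via \eqref{bgc} applied to $\Omega$). The problem thereby reduces to bounding
\begin{equation*}
\int_0^{|\Omega|} \left( \frac{\sigma^{\alpha/n - 1} \int_0^\sigma f^*(s)\ds}{1 + |\log(\sigma/|\Omega|)|} \right)^q \frac{d\sigma}{\sigma} \quad \text{and} \quad \int_0^{|\Omega|} \left( \frac{\int_\sigma^{|\Omega|} s^{\alpha/n - 1} f^*(s)\ds}{1 + |\log(\sigma/|\Omega|)|} \right)^q \frac{d\sigma}{\sigma}
\end{equation*}
by $C\|f\|_{L^{n/\alpha, q}(\Omega)}^q$. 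The first is handled by the classical Hardy inequality after discarding the log-weight (which is no smaller than $1$); the second, after the substitution $u = \log(|\Omega|/\sigma)$, reduces to the one-dimensional Hardy-type estimate
\begin{equation*}
\int_0^\infty \left( \frac{1}{1 + u} \int_0^u G(v)\, dv \right)^q \du \leq C \int_0^\infty G(v)^q\, dv.
\end{equation*}
These are precisely the Hardy inequalities used in the proof of the Lebesgue version \eqref{HBW_Lebesgue_Riesz}.

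The main obstacle is the trace O'Neil estimate. The tail contribution is essentially insensitive to $\nu$, but controlling the near part in terms of the $\nu$-rearrangement requires carefully extracting the correct $d$-dimensional scaling from \eqref{bgc} at the level of $\nu$-level sets. Once this scaling is in place, the rest of the proof is a routine reduction to the classical Hansson--Brezis--Wainger analysis.
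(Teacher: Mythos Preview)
Your overall strategy---an O'Neil-type rearrangement estimate followed by two Hardy inequalities on the half-line---is exactly the paper's approach, and your treatment of the tail term and of both Hardy reductions is correct. The gap is in the near-part O'Neil estimate you propose,
\[
(I_\alpha f_1)_\nu^*(t)\leq C_1\,t^{(\alpha-n)/d}\int_0^{c_n t^{n/d}}f^*(s)\,\ds,
\]
which is the formal $r=1$ endpoint of Lemma~\ref{oneil-lemma} and is \emph{false} when $d<n-\alpha$. Take $\nu$ to be $d$-dimensional Hausdorff measure on a $d$-plane in $\Omega$, place $N$ separated points $x_1,\dots,x_N$ on that plane, and let $f_1$ approximate $N^{-1}\sum_i\delta_{x_i}$ by thin bumps; then $\|f_1\|_{L^1}=1$ and $|\operatorname{supp}f_1|$ can be made arbitrarily small, yet the level set $\{I_\alpha f_1>\lambda\}$ consists of roughly $N$ balls of radius $\sim(N\lambda)^{-1/(n-\alpha)}$, so the ball growth condition yields $(I_\alpha f_1)_\nu^*(t)\sim N^{(n-\alpha)/d-1}t^{(\alpha-n)/d}$, unbounded in $N$. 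Your Vitali covering sketch cannot repair this: a covering of the level set controls $\sum r_i^n$, not $\sum r_i^d$, and for many small balls the latter is much larger. In effect you are asserting $I_\alpha:L^1\to L^{d/(n-\alpha),\infty}_\nu$ with target exponent below $1$, a regime in which subadditivity fails.

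The paper circumvents this by invoking Lemma~\ref{oneil-lemma} with a parameter $r$ strictly inside $(\max\{1,(n-d)/\alpha\},\,n/\alpha)$, which replaces $\int_0^{t^{n/d}}f^*$ by the weighted average $\int_0^{t^{n/d}}f^*(u)\,u^{-1+1/r}\,\du$ and the coefficient $t^{(\alpha-n)/d}$ by $t^{-1/\theta}$ with $\theta=rd/(n-\alpha r)$. After Lemma~\ref{another-hardy} this produces $\|f\|_{L^{p_0,q}}$ with $p_0=\frac{nr\theta}{n\theta-d}<\frac{n}{\alpha}$, and one closes via the embedding $L^{n/\alpha,q}(\Omega)\hookrightarrow L^{p_0,q}(\Omega)$ on the bounded domain. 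The lower bound $r>(n-d)/\alpha$ is precisely what makes the near-part estimate survive for all $d\in(0,n]$; your proposal works as written only in the range $d\geq n-\alpha$.
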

\noindent
Note here that $(I_\alpha f)_{\nu}^{*}$ is the rearrangement taken with respect to the Radon measure $\nu$:  For a $\nu$ measurable function $g$ and $t\in (0,\infty)$,
	\begin{align}\label{nonincreasing_rearrangement}
		g_\nu^*(t)=\inf\{\lambda>0; \, \nu(\{x\in \Omega; \, |g(x)|>\lambda\})\leq t\}.
	\end{align}
In the sequel, when there is no possibility of ambiguity, for notational convenience we drop the subscript of $\nu$ in $g_\nu^*(t)$ and write $g^*(t)$ for rearrangements taken with respect to both $\nu$ and the Lebesgue measure.

Finally, let us record the following easy consequence of Theorem \ref{traceHBW} and inequality \eqref{representation_inequality}.
\begin{corollary}\label{diff_traceHBW}
Let $\Omega$ be an open bounded subset of $\rn$, $n\geq 2$. Let $k \in \mathbb{N}$, $1\leq k<n$.  Let $\nu$ be any positive Borel measure on $\Omega$ which satisfies \eqref{bgc} for some $d\in (0,n]$.  Then there exists positive constant $C=C(n,k,\Omega, C'_d, q)$ such that
\begin{align}\label{bw-eq}
			\left(	\int_{0}^{\nu(\Omega)}\frac{\left(u_{\nu}^{*}(t)\right)^q}{\left(1+|\log \frac{t}{\nu(\Omega)}|\right)^q}\frac{\dt}{t}\right)^{\frac{1}{q}} \leq C\|\nabla^k u\|_{L^{\frac{n}{k},q}(\Omega)}
		\end{align}
for every $u\in W_0^k L^{\frac{n}{k},q}(\Omega)$.
	\end{corollary}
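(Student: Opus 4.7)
The plan is to obtain this estimate as a direct consequence of Theorem \ref{traceHBW}, with the representation formula \eqref{representation_inequality} serving as the bridge from the Riesz potential side to the higher order gradient side. The chain of implications is short, so the emphasis should be on verifying that each step respects the rearrangement with respect to the trace measure $\nu$.

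First, I would extend $u \in W_0^k L^{\frac{n}{k},q}(\Omega)$ by zero to all of $\rn$ so that the pointwise representation \eqref{representation_inequality} applies, yielding
\[
|u(x)| \leq c\, I_k(|\nabla^k u|)(x) \quad \text{for a.e. } x \in \Omega,
\]
with $c = c(n,k) > 0$. Taking $f := |\nabla^k u| \in L^{\frac{n}{k},q}(\Omega)$ and $\alpha = k$ in Theorem \ref{traceHBW} gives
\[
\left(\int_{0}^{\nu(\Omega)}\frac{\bigl((I_k f)_{\nu}^{*}(t)\bigr)^q}{\bigl(1+|\log \frac{t}{\nu(\Omega)}|\bigr)^q}\frac{\dt}{t}\right)^{\frac{1}{q}} \leq C\|\nabla^k u\|_{L^{\frac{n}{k},q}(\Omega)}.
\]

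The remaining point is to replace $(I_k f)_\nu^*$ on the left by $u_\nu^*$. For this I would invoke the standard monotonicity of the decreasing rearrangement: if $|g(x)| \leq |h(x)|$ for $\nu$-a.e.\ $x$, then $g_\nu^*(t) \leq h_\nu^*(t)$ for all $t > 0$, which follows directly from the defining formula \eqref{nonincreasing_rearrangement} since the super-level sets of $|g|$ are contained in those of $|h|$. Applied to $|u| \leq c\, I_k f$ this gives $u_\nu^*(t) \leq c\, (I_k f)_\nu^*(t)$, and substituting into the previous display yields the claimed inequality with constant $cC$.

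There is essentially no obstacle here beyond bookkeeping; the only point worth care is to ensure that the representation formula \eqref{representation_inequality} is valid for every $u$ in the Lorentz--Sobolev class $W_0^k L^{\frac{n}{k},q}(\Omega)$ and not merely for smooth compactly supported functions. This is a routine density argument, since $C_c^\infty(\Omega)$ is dense in $W_0^k L^{\frac{n}{k},q}(\Omega)$ for $1 < q < \infty$ and both sides of \eqref{representation_inequality} are continuous under convergence in the relevant norms; alternatively, it is already built into the framework used to prove Theorem \ref{main-thm-0} elsewhere in the paper, so one can simply cite it.
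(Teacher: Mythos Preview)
Your proposal is correct and follows exactly the paper's approach: the paper's proof consists of the single sentence that the corollary follows immediately from Theorem~\ref{traceHBW} and the inequality~\eqref{representation_inequality}. You have simply unpacked this, supplying the monotonicity-of-rearrangement and density details that the paper leaves implicit.
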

\begin{remark}
One also has the inequality for $\|\nabla^k u\|_{L^{\frac{n}{k},q}(\Omega)}$ replaced with $\|D^k u\|_{L^{\frac{n}{k},q}(\Omega)}$.
\end{remark}

The plan of the papers is as follows.  In Section \ref{preliminaries}, we recall some necessary preliminaries, including several Hardy inequalities, the precise definitions of the constants appearing in the paper, and some background on non-increasing rearrangements.  We also recall and prove several lemmas involving non-increasing rearrangements that will be useful in the sequel.  In Section \ref{main_results}, we prove Theorems \ref{main-thm-0},  \ref{main-thm}, and  \ref{main-thm-1}.  In Section \ref{main_results_infty}, we prove Theorems \ref{main-thm-infty}, \ref{main-thm-infty-1}, and  \ref{main-thm-infty-2}.  Finally, in Section \ref{HBW} we prove Theorem \ref{traceHBW} and Corollary \ref{diff_traceHBW}.

	\section{Preliminaries}\label{preliminaries}
	We begin this section by fixing some notation.  Throughout the paper, we assume $\Omega$ is a bounded open set in $\rn$ with dimension $n\geq 2$. Our results are quantified over $d\in (0,n]$ and all non-negative Borel measures satisfying the ball growth conditions \eqref{bgc} and \eqref{condition-1}.  
	
	For $k\in \mathbb{N}$, we denote by
	\begin{align*}
		\nabla^k u(x)=\left\{\frac{\partial^k u}{\partial_{x_{i_1}}\cdots\partial_{x_{i_k}}}\right\}_{i_k, \cdots, i_k\in \mathcal{I}_n} \quad \text{ where } \quad  \mathcal{I}_n=\{1,\cdots,n\},
	\end{align*}
	the tensor consisting of the $n^k$ partial derivatives of $u$ of order $k$ at the point $x$ and $|\nabla^k u|(x)$ denotes the Euclidean norm of this vector in $\mathbb{R}^{n^k}$.
	
	We now recall two Hardy inequalities that will be useful in the sequel.  The first is the classical Hardy inequality, see e.g. \cite[Eqn. (1.3.1) on p.~40]{Mazyabook}:
\begin{lemma}\label{another-hardy}
		Let $1<p<\infty$ and $w>1$. If $\psi \geq 0$ is measurable, then
\begin{align*}
\int_{0}^{\infty} t^{-w} \left(\int_{0}^{t}\psi(s)\ds\right)^p
\dt \leq  \left(\frac{p}{w-1}\right)^p \int_{0}^{\infty}t^{-w}(t\psi(t))^p \dt.
\end{align*} 
\end{lemma}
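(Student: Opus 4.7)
The plan is to derive this classical Hardy inequality by Minkowski's integral inequality after a scaling change of variable, which produces the sharp constant $p/(w-1)$ in one stroke. First I would put the inner integral into scale-invariant form via the substitution $s=ut$ with $u\in(0,1)$, so that
$$\int_0^t \psi(s)\,ds = t\int_0^1 \psi(ut)\,du,$$
and consequently the left-hand side equals $\int_0^\infty t^{p-w}\bigl(\int_0^1 \psi(ut)\,du\bigr)^p\,dt$.

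Next I would apply Minkowski's integral inequality to pull the $u$-integral outside the $L^p$ norm on $(0,\infty)$ taken with respect to the measure $t^{p-w}\,dt$:
$$\left(\int_0^\infty t^{p-w}\Bigl(\int_0^1 \psi(ut)\,du\Bigr)^{p}\,dt\right)^{1/p} \leq \int_0^1 \Bigl(\int_0^\infty \psi(ut)^p\, t^{p-w}\,dt\Bigr)^{1/p}\,du.$$
For each fixed $u\in(0,1)$, the substitution $r=ut$ evaluates the inner integral as $u^{w-p-1}\int_0^\infty (r\psi(r))^p\, r^{-w}\,dr$, so factoring this out reduces matters to computing $\int_0^1 u^{(w-p-1)/p}\,du$. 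Because $w>1$, the exponent satisfies $(w-p-1)/p + 1 = (w-1)/p > 0$, so this last integral converges and equals $p/(w-1)$. Raising the resulting bound to the $p$-th power yields the desired inequality with constant $(p/(w-1))^p$.

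The main obstacle here is really only measurability and integrability bookkeeping, not a conceptual difficulty: for non-negative measurable $\psi$ the iterated integrals and the scaling substitution are justified by Tonelli, and the case of an infinite right-hand side is vacuous. A well-known alternative route proceeds by integrating by parts in $t$ against the primitive $t^{1-w}/(1-w)$, followed by H\"older's inequality with conjugate exponents $p,p'$ and absorption of the factor $\bigl(\int_0^\infty t^{-w}F^p\,dt\bigr)^{1/p'}$, where $F(t):=\int_0^t\psi(s)\,ds$; this also gives the sharp constant but requires an additional truncation argument to justify vanishing boundary terms at $0$ and $\infty$. For this expository lemma the Minkowski route is therefore cleaner.
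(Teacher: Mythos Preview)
Your proof via the scaling substitution $s=ut$ followed by Minkowski's integral inequality is correct and yields the sharp constant $p/(w-1)$; the measurability and integrability justifications you mention are handled by Tonelli exactly as you say. Note, however, that the paper does not actually supply a proof of this lemma: it is stated as the classical Hardy inequality with a reference to Maz'ya's book \cite[Eqn.~(1.3.1) on p.~40]{Mazyabook}, so there is no ``paper's own proof'' to compare against. Your argument is one of the standard textbook derivations and would serve perfectly well as a self-contained proof here.
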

	
The second is \cite[Lemma 1 on p.~782]{BW} (with $R$ replacing $1$), which we state as	
	\begin{lemma}\label{log-hardy}
		Let $1<p<\infty$ and $R>0$.  If $\psi \geq 0$ is measurable, then
\begin{align*}
\int_{0}^{R}\frac{\left(\int_{t}^{R}\psi(s)\ds\right)^p}{\left(1+|\log \left(\frac{t}{R}\right)|\right)^p}\frac{\dt}{t}\leq \left(\frac{p}{p-1}\right)^p\int_{0}^{R}t^{p-1}\psi^p(t)\dt.
\end{align*} 
\end{lemma}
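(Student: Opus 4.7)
The plan is to reduce to the dilation-normalized case $R=1$ and then perform a logarithmic substitution which converts the weighted inequality into the classical Hardy inequality of Lemma \ref{another-hardy} with the advertised constant.

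For the reduction, I would substitute $t = R\tau$, $s = R\sigma$ and introduce the rescaled $\tilde\psi(\sigma) := R\,\psi(R\sigma)$; a direct computation shows that both sides of the inequality for $\psi$ on $(0,R)$ equal the corresponding sides of the inequality for $\tilde\psi$ on $(0,1)$, so it suffices to treat $R=1$. On $(0,1)$ we have $1+|\log t| = 1-\log t$, which motivates the change of variables $u = 1-\log t$, equivalently $t = e^{1-u}$, so that $du = -dt/t$ and $u$ ranges over $[1,\infty)$ as $t$ ranges over $(0,1]$. Setting $\phi(v) := \psi(e^{1-v})\,e^{1-v}$ for $v \geq 1$ and extending $\phi$ by zero on $(0,1)$, the same substitution applied to the inner integral yields $\int_t^1 \psi(s)\,ds = \int_1^u \phi(v)\,dv$, while the right-hand side of the lemma transforms into $\int_1^\infty \phi^p(v)\,dv$ after $(t^{p-1}\,dt) \mapsto (e^{1-v})^p\,dv$ absorbs into $\phi^p$.

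The target inequality thereby reduces to
$$\int_1^\infty u^{-p}\left(\int_1^u \phi(v)\,dv\right)^p du \leq \left(\frac{p}{p-1}\right)^p \int_1^\infty \phi^p(v)\,dv,$$
and since $\phi \equiv 0$ on $(0,1)$ I can extend both outer integrals harmlessly to $(0,\infty)$; what remains is precisely Lemma \ref{another-hardy} with $w = p > 1$, which closes the estimate. The argument is essentially bookkeeping; the only step that requires any real care is verifying that the weights and Jacobian factors balance so that the constant emerging from Lemma \ref{another-hardy} is exactly $(p/(p-1))^p$ with no loss. This is the reason for the particular choice $u = 1-\log t$: the $+1$ in the weight $(1+|\log(t/R)|)^p$ precisely shifts the integration interval to $[1,\infty)$, avoiding the singularity at $u=0$ that would arise from the naive choice $u = -\log t$ and aligning the problem with the standard Hardy regime where Lemma \ref{another-hardy} applies directly.
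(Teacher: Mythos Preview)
Your proof is correct: the scaling reduction to $R=1$ and the substitution $u=1-\log t$, $\phi(v)=\psi(e^{1-v})e^{1-v}$ cleanly convert both sides into the form of Lemma~\ref{another-hardy} with $w=p$, yielding exactly the constant $(p/(p-1))^p$. The paper does not actually prove this lemma---it is simply quoted from \cite[Lemma~1]{BW}---so your self-contained derivation from Lemma~\ref{another-hardy} is a welcome addition rather than a deviation from any argument in the text.
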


	We next recall the definition of the Riesz potentials.  For $\alpha \in (0,n]$ and $f \in L^1_{loc}(\mathbb{R}^n)$ we define
\begin{align*}
I_\alpha f(x) := \frac{1}{\gamma(\alpha)} \int_{\mathbb{R}^n} \frac{f(y)}{|x-y|^{n-\alpha}}\;dy,
\end{align*}
where
	\begin{align}
		\gamma(\alpha):=\frac{\pi^{n/2} 2^\alpha \Gamma\left(\alpha/2\right)}{\Gamma\left(\frac{n-\alpha}{2}\right)},
	\end{align}
	see for example, \cite[p.~117]{S}.  It is also useful to use this constant to define each $\alpha \in [0, n)$,
	\begin{equation}
		\widetilde{\gamma}(\alpha)=
		\left\{ \begin{array}{ll}
			\alpha \gamma(\alpha) & \text{ if } \alpha>0\,; \\
			n\omega_{n}  & \text{ if } \alpha =0\,.
		\end{array} \right.
	\end{equation}
	Note that $\widetilde{\gamma}(\alpha)$ is continuous at $\alpha=0$. 
	
	With this preparation, we introduce several representation formulas.  First, we have the formula recorded in \cite[Lemma 2]{Adams1988} (see also the discussion in \cite{SS}). For $k \in \mathbb{N}$ and $u \in C^\infty_c(\mathbb{R}^n)$ we have
	\begin{equation}\label{derv-rep}
		u(x)=
		\left\{ \begin{array}{ll}
			\frac{1}{\gamma(k)} \int_{\mathbb{R}^{n}}|x-y|^{k-n} D^k u(y) \dy & \text{ if } k \text { is even}; \\
			\frac{1}{\widetilde{\gamma}(k-1)} \int_{\mathbb{R}^{n}} |x-y|^{k-1-n}(x-y)\cdot D^k u(y)  \dy & \text{ if } k \text { is odd}.
		\end{array} \right.
	\end{equation}
	
	From \eqref{derv-rep} and using the definition of the Riesz potential, one has the pointwise inequality 
	\begin{equation}\label{pt-wise}
		|u(x)| \leq
		\left\{ \begin{array}{ll}
			I_{k}\left(\left|D^{k} u\right|\right)(x) & \text{ if } k \text { is even}; \\
			\frac{\gamma(k)}{\widetilde{\gamma}(k-1)} I_{k}\left(\left|D^{k} u\right|\right)(x) &\text{ if }  k \text { is odd}.
		\end{array} \right.
	\end{equation}

For the differential operators $D^k$, it will be useful to define constants that depend on the parity of $k$:  For $k \in \mathbb{N}$, $1 \leq k <n$, we define
	\begin{equation}\label{beta-n-k}
		\beta_{n,k,q}=
		\left\{ \begin{array}{ll}
			\left(\frac{d}{n}\right)^{\frac{1}{q^\prime}}\gamma(k)\,\omega_n^{-\frac{n-k}{n}} & \text{ if } k \text { is even}; \\
			\left(\frac{d}{n}\right)^{\frac{1}{q^\prime}}\widetilde{\gamma}(k-1) \, \omega_n^{-\frac{n-k}{n}} &\text{ if }  k \text { is odd}.
		\end{array} \right.
	\end{equation}

	We next recall the definition of a few relevant constants and their relation with the function $\log|x|$.  First, we recall the combinatorial constant which appears in \eqref{combinatorial_constant} (see e.g. \cite{MoriiSatoSawano}):
	\begin{align*}
		\ell^k_n=(k)!\sum_{l=0}^{\lfloor k/2 \rfloor}(k-2l)!(l)!\left(\frac{n-3}{2}+l\right)_l\,\left( \sum_{t=\lceil k/2 \rceil}^{k-l}2^{2t-k+l}\frac{(-1)^t}{2t}\binom{t}{k-t}\binom{k-t}{l}\right)^2
	\end{align*}
	where
	\begin{align*}
		(\mu)_i=
		\left\{ \begin{array}{ll}
			\prod_{j=0}^{i-1}(\mu-j)  & \text{ for } \mu\in\R,\, i\in\N; \\
			1 & \text{ for } \mu\in\R,\, i=0.
		\end{array} \right.
	\end{align*}

This constant makes an appearance in the following lemma which will be useful in the sequel (and whose demonstration follows easily from the computations in \cite{SS}).
	\begin{lemma}\label{log_constants}
		Let $k \in \mathbb{N}$ and $\alpha \in (0,n)$.  For $x\neq 0$ one has
		\begin{align}
			|\nabla^k \log |x| | &= \frac{\sqrt{\ell^k_n}}{|x|^k}, \label{natural}\\
			|\nabla (-\Delta)^{(k-1)/2} \log |x| | &=      \frac{\tilde{\gamma}(k-1)}{n\omega_n} \frac{1}{|x|^k} , \label{adams}\\
			|(-\Delta)^{\alpha/2} \log |x| | &= \frac{\gamma(\alpha)}{n\omega_n} \frac{1}{|x|^{\alpha}} .\label{potential}
		\end{align}
	\end{lemma}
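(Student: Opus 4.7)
The plan is to reduce each of the three identities to a ``constant over $|x|$ to a power'' formula via scaling and rotation invariance, and then to identify the three constants individually. Observe that $\log|\lambda x|=\log\lambda+\log|x|$, so any operator $L$ of positive order that annihilates constants satisfies, on the one hand, $L(\log|\lambda\,\cdot|)(x)=(L\log|\cdot|)(x)$, and, on the other, by the chain rule (or Fourier rescaling), $\lambda^{\mathrm{ord}(L)}(L\log|\cdot|)(\lambda x)$. Setting $\lambda=1/|x|$ and using that $L\log|\cdot|$ is rotation--invariant (because $\log|\cdot|$ is radial and $L$ commutes with orthogonal transformations) forces $|L\log|x||=c_L/|x|^{\mathrm{ord}(L)}$ for a constant $c_L$ that I need only evaluate at a single unit vector.

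For \eqref{potential}, $L=(-\Delta)^{\alpha/2}$ is the Fourier multiplier $|\xi|^\alpha$, and the Fourier transform of $\log|x|$ is, modulo a distribution supported at the origin, a constant multiple of $|\xi|^{-n}$; multiplying by $|\xi|^\alpha$ and inverting yields the classical Riesz normalization $c_L=\gamma(\alpha)/(n\omega_n)$, as recorded in \cite[Ch.~V]{S}. As a quick sanity check, at $\alpha=2$ this gives $|(-\Delta)\log|x||=(n-2)/|x|^2$, in agreement with direct polar--coordinate differentiation. Identity \eqref{adams} then follows from \eqref{potential} with $\alpha=k-1\in[1,n)$ for $k\geq 2$ (the case $k=1$ being immediate from $\nabla\log|x|=x/|x|^2$), combined with the computation
\begin{equation*}
\left|\nabla|x|^{-(k-1)}\right|=\frac{k-1}{|x|^k}
\end{equation*}
and the relation $(k-1)\gamma(k-1)=\widetilde{\gamma}(k-1)$. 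For \eqref{natural}, the same scaling argument reduces the claim to evaluating $|\nabla^k\log|x||$ at a single unit vector; here I would expand each partial derivative via the Fa\`a di Bruno/Leibniz formulas, exploiting $\partial_i\log|x|=x_i/|x|^2$ so that higher derivatives become sums of monomials in the $x_j$ and Kronecker deltas divided by powers of $|x|^2$, and then square and sum the resulting $n^k$ tensor entries.

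The main obstacle is the combinatorial bookkeeping in \eqref{natural}: the constant $\sqrt{\ell^k_n}$ arises from counting, after squaring and summing, the many ways in which repeated differentiation of $|x|^{-2}$ type terms produces coincident tensor components. This bookkeeping has been carried out cleanly in \cite{MoriiSatoSawano} and adapted in \cite{SS}, so my plan is to import their expansion rather than rederive it. A secondary subtlety is making the Fourier step for \eqref{potential} fully rigorous: because $\log|x|$ is only a tempered distribution whose Fourier transform carries a singular contribution at the origin, the identity $(-\Delta)^{\alpha/2}\log|x|=\gamma(\alpha)(n\omega_n)^{-1}|x|^{-\alpha}$ should be phrased and verified in the sense of distributions on $\rn\setminus\{0\}$, after which the scaling and radial symmetry arguments pin down the multiplicative constant without ambiguity.
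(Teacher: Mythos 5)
Your argument is correct, and it is consistent with the paper's treatment: the paper offers no independent proof of this lemma, deferring entirely to the computations in \cite{SS} (with the constant $\ell^k_n$ coming from \cite{MoriiSatoSawano}), which is exactly what you do for \eqref{natural}. Where you go beyond the paper is in supplying an actual derivation of \eqref{potential} and \eqref{adams}: the scaling-plus-rotation-invariance reduction to a constant times $|x|^{-\mathrm{ord}(L)}$ is sound (the fractional Laplacian annihilates constants since $|\xi|^\alpha$ is continuous and vanishes at the origin), your normalization $\gamma(\alpha)/(n\omega_n)$ is indeed the right one (it can be double-checked, e.g., by differentiating the classical formula for $(-\Delta)^{\alpha/2}|x|^{-s}$ at $s=0$, which gives $2^{\alpha-1}\Gamma(\tfrac{\alpha}{2})\Gamma(\tfrac{n}{2})/\Gamma(\tfrac{n-\alpha}{2})=\gamma(\alpha)/(n\omega_n)$, matching your $\alpha=2$ sanity check), and the passage from \eqref{potential} to \eqref{adams} via $|\nabla |x|^{-(k-1)}|=(k-1)|x|^{-k}$ together with $\widetilde{\gamma}(k-1)=(k-1)\gamma(k-1)$ (and the separate $k=1$ case, where $\widetilde{\gamma}(0)=n\omega_n$) is exactly right. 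The only caveats are the ones you already flag: the Fourier step must be interpreted distributionally away from the origin, and the combinatorial identity \eqref{natural} is imported rather than rederived — but the paper does no more than that either.
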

	
The preceding lemma is useful in establishing the following pointwise estimate for $u$ in terms of $\nabla^k u$ given in \cite[Corollary 3.1]{SS}. Let $k\in \mathbb{N}$, $1 \leq k<n$. Then for $u\in C_0^\infty(\rn)$, one has 
	\begin{align}\label{pt-wise-org}
		|u(x)|\leq \frac{\gamma(k)}{n\omega_{n}\sqrt{\ell^k_n}}\, I_k(|\nabla^k u|)(x).
	\end{align}

We next recall the definition of the Lorentz spaces and Sobolev--Lorentz spaces.  For $1\leq p,q\leq \infty$ we denote by $L^{p,q}(\Omega)$ the Lorentz space of those measurable functions $u$ for which the following quantity
	\[
	\|u\|_{L^{p,q}(\Omega)}:= \begin{cases} \left(\int_{0}^{|\Omega|}\left(s^\frac{1}{p}u^*(s)\right)^q \frac{\ds}{s}\right)^{\frac{1}{q}} & \text { if } q<\infty; \\ \underset{0<s<|\Omega|}{\sup} s^{\frac{1}{p}} u^*(s) & \text { if } q=\infty,\end{cases}
	\]
	is finite.  

We also recall an alternative formulation of the Lorentz spaces given in \cite[Proposition~1.4.9 on p~53]{grafakos}:
		\[
		\|u\|_{L^{p,q}(X)}:= \begin{cases} p^{\frac{1}{q}}\left(\int_{0}^{\infty}\big[d^X_u(s)^{\frac{1}{p}}s\big]^q \frac{\ds}{s}\right)^{\frac{1}{q}} & \text { if } q<\infty; \\ \underset{s>0}{\sup} \,\, s\,d^X_u(s)^{\frac{1}{p}} & \text { if } q=\infty,\end{cases}
		\]
		where the distribution function $d_u^X$ is defined on $[0,\infty)$ as follows:
\begin{align*}
			d_u^X(s):=\left|\{x\in X\, :\, |u(x)|>s\}\right|.
\end{align*}
	
	For any integer $k\geq 1$ and any  $1\leq p,q\leq \infty$, we define
	\begin{align*}
		W_0^k L^{p,q}(\Omega)&=\{u:\, u \text{ is a real-valued function in } \Omega \text{ whose continuation by } 0 \text{ out }\\
		& \text{ side }\Omega \text{ is }k\text{-times weakly differentiable in the whole space } \rn, \text{ and }\\& \|\, |\nabla^k u|\, \|_{L^{p,q}(\Omega)}<\infty\},
	\end{align*}
	where $|\nabla^k u|$ is the Euclidean norm of $\nabla^k u$ defined earlier. 
	
Let us observe that for any $k\geq 1$, and $p\geq 1$, we have
	\begin{align*}
		L^{p,p}(\Omega)=L^p(\Omega) \quad \text{ and hence }\quad W_0^k L^{p,p}(\Omega)=W^{k,p}_0(\Omega).
	\end{align*}

In connection to the sharpness of the constants, a useful result to record is
	\begin{lemma}\label{weak-type_norm}
		Let $\alpha \in (0,n)$, $c>0$.  Then for $x_0 \in \Omega$,
		\begin{align*}
			\left\| \frac{c}{|\cdot-x_0|^\alpha}\right\|_{L^{n/\alpha,\infty}(\Omega)} \leq c\omega_n^{\alpha/n}.
		\end{align*}
	\end{lemma}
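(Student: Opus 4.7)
The plan is to compute the distribution function of $u(x) := c/|x-x_0|^\alpha$ directly and plug it into the supremum characterization of the weak Lorentz norm recalled from \cite[Proposition~1.4.9]{grafakos},
\[
\|u\|_{L^{n/\alpha,\infty}(\Omega)} = \sup_{s>0} s\,d^\Omega_u(s)^{\alpha/n}.
\]

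For any $s>0$, the inequality $|u(x)|>s$ is equivalent to $|x-x_0|<(c/s)^{1/\alpha}$. Hence the superlevel set of $u$ in $\Omega$ is contained in the ball $B(x_0,(c/s)^{1/\alpha})$, giving
\[
d^\Omega_u(s) = \left|\{x\in\Omega : |u(x)|>s\}\right| \leq \omega_n \left(\frac{c}{s}\right)^{n/\alpha}.
\]
Raising this to the power $\alpha/n$ and multiplying by $s$ yields $s\,d^\Omega_u(s)^{\alpha/n} \leq c\,\omega_n^{\alpha/n}$ for every $s>0$, so taking the supremum gives the claim.

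There is essentially no obstacle here: the estimate is sharp (saturated on $\Omega=\mathbb{R}^n$) and the argument is a direct evaluation. The only mild subtlety is that we bound the measure of the superlevel set intersected with $\Omega$ by the measure of the whole ball $B(x_0,(c/s)^{1/\alpha})$, which may overshoot when that ball is not contained in $\Omega$; this only makes the inequality better and is the reason the lemma yields a clean $\leq$ with no dependence on $\Omega$.
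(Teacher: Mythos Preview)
Your proof is correct and follows essentially the same approach as the paper's own proof: both bound the superlevel set $\{x\in\Omega:|u(x)|>t\}$ by the full ball $B(x_0,(c/t)^{1/\alpha})$, compute its Lebesgue measure as $\omega_n(c/t)^{n/\alpha}$, and conclude via the supremum definition of the $L^{n/\alpha,\infty}$ quasi-norm.
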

	\begin{proof}
		The claim follows from the computation
		\begin{align*}
			t\left| \left\{ x \in \Omega \colon \frac{c}{|x-x_0|^\alpha} >t\right \}\right|^{\alpha/n} &\leq t\left|B\left(0,\left(\frac{c}{t}\right)^{1/\alpha}\right)\right|^{\alpha/n} \\
			&= c \omega_n^{\alpha/n}.
		\end{align*}
	\end{proof}
	
Another useful observation that will be helpful in dealing with the quasi-norm in pieces of the domain is
	\begin{lemma}\label{lor-break}
		Let $1\leq p< \infty$, and $1\leq q\leq \infty$. Assume $A$, and $B$ be measurable subsets of $\rn$ such that $A\cap B=\emptyset$. Then for any measurable function $u$ on $A\cup B$, there holds
		\begin{align}\label{eq-lor-break}
			\|u\|_{L^{p,q}(A\cup B)}\leq \|u\|_{L^{p,q}(A)} +\|u\|_{L^{p,q}(B)}.
		\end{align}
	\end{lemma}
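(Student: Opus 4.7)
The plan is to work through the alternative formulation of the Lorentz quasi-norm recalled just above the statement, based on the distribution function $d_u^X$, since this form interacts more naturally with disjoint unions than the rearrangement form. The crucial observation is that because $A\cap B=\emptyset$, the distribution function is additive:
\begin{align*}
d_u^{A\cup B}(s) = d_u^A(s) + d_u^B(s) \quad \text{for every } s\geq 0.
\end{align*}

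First I would handle the case $q<\infty$. Combining additivity with the elementary inequality $(a+b)^{1/p}\leq a^{1/p}+b^{1/p}$, valid for $a,b\geq 0$ and $p\geq 1$ by concavity of $t\mapsto t^{1/p}$, gives the pointwise bound
\begin{align*}
s\, d_u^{A\cup B}(s)^{1/p} \leq s\, d_u^A(s)^{1/p} + s\, d_u^B(s)^{1/p}
\end{align*}
for every $s>0$. Then an application of Minkowski's inequality for the measure $ds/s$ on $(0,\infty)$ to the two-term sum on the right yields
\begin{align*}
\left(\int_0^\infty [s\, d_u^{A\cup B}(s)^{1/p}]^q\frac{ds}{s}\right)^{1/q} \leq \left(\int_0^\infty [s\, d_u^A(s)^{1/p}]^q\frac{ds}{s}\right)^{1/q} + \left(\int_0^\infty [s\, d_u^B(s)^{1/p}]^q\frac{ds}{s}\right)^{1/q},
\end{align*}
and multiplying through by $p^{1/q}$ produces \eqref{eq-lor-break}.

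The case $q=\infty$ is even simpler: from the same pointwise inequality one obtains
\begin{align*}
\sup_{s>0} s\, d_u^{A\cup B}(s)^{1/p} \leq \sup_{s>0} s\, d_u^A(s)^{1/p} + \sup_{s>0} s\, d_u^B(s)^{1/p},
\end{align*}
which is the desired conclusion by definition of the $L^{p,\infty}$ quasi-norm. There is no genuine obstacle here; the entire argument reduces to additivity of the distribution function on disjoint sets, subadditivity of $t\mapsto t^{1/p}$, and the triangle inequality in $L^q(ds/s)$. The only minor point to take care of is that the triangle inequality in $L^q$ requires $q\geq 1$, which is part of the hypothesis.
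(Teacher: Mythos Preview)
Your proof is correct and follows essentially the same route as the paper: both use additivity of the distribution function on disjoint sets, the subadditivity of $t\mapsto t^{1/p}$ for $p\geq 1$, and then the triangle inequality in $L^q$ (Minkowski) for the case $q<\infty$ and a direct supremum bound for $q=\infty$. The only cosmetic difference is that the paper writes the $L^{p,q}$ norm as $p^{1/q}\|d_u^X(s)^{1/p}s^{1-1/q}\|_{L^q(0,\infty)}$ rather than integrating against $ds/s$, which is the same computation.
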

	\begin{proof}
		Let $X$ be a measurable subset of $\rn$.  For any $s\in [0,\infty)$, we notice
		\begin{align*}
			\{x\in A\cup B\, :\, |u(x)|>s\} =\{x\in A\, :\, |u(x)|>s\}\cup\{x\in  B\, :\, |u(x)|>s\}.
		\end{align*}
		Moreover, the above union is disjoint due to the fact $A\cap B=\emptyset$. Therefore, we have
		\begin{align}\label{lor-break-st-1}
			d_u^{A\cup B}(s)= d_u^{A}(s) + d_u^{B}(s).
		\end{align}
		
		Given that $p\geq 1$, using \eqref{lor-break-st-1}, we have 
		\begin{align}\label{lor-break-st-2}
			d_u^{A\cup B}(s)^{\frac{1}{p}} \leq d_u^{A}(s)^{\frac{1}{p}}+d_u^{B}(s)^{\frac{1}{p}} 
		\end{align}
		for any $0\leq s<\infty$. Now suppose the exponent $q=\infty$. Then multiplying both side of \eqref{lor-break-st-2} by $s$ and taking supremum over it, we have
		\begin{align*}
			\|u\|_{L^{p,\infty}(A\cup B)} \leq 	\|u\|_{L^{p,\infty}(A)} + 	\|u\|_{L^{p,\infty}(B)}.
		\end{align*}
		
		Now consider the case $1\leq q<\infty$. Then, observe that
		\begin{align*}
			\|u\|_{L^{p,q}(A\cup B)} = p^{\frac{1}{q}}\left(\int_{0}^{\infty}\big[d^{A\cup B}_u(s)^{\frac{1}{p}}s^{1-\frac{1}{q}}\big]^q \ds\right)^{\frac{1}{q}}=p^{\frac{1}{q}}\|d^{A\cup B}_u(s)^{\frac{1}{p}}s^{1-\frac{1}{q}}\|_{L^q(0,\infty)}.
		\end{align*} 
		Hence, using \eqref{lor-break-st-2} we note
		\begin{align*}
			\|d^{A\cup B}_u(s)^{\frac{1}{p}}s^{1-\frac{1}{q}}\|_{L^q(0,\infty)}&\leq \|\left(d_u^{A}(s)^{\frac{1}{p}}+d_u^{B}(s)^{\frac{1}{p}}\right)s^{1-\frac{1}{q}}\|_{L^q(0,\infty)}\\ & \leq \|d^{A}_u(s)^{\frac{1}{p}}s^{1-\frac{1}{q}}\|_{L^q(0,\infty)}+\|d^{B}_u(s)^{\frac{1}{p}}s^{1-\frac{1}{q}}\|_{L^q(0,\infty)},
		\end{align*}
		where in the last line we use the triangle inequality for the space $L^q(0,\infty)$ with  $1\leq q<\infty$. Finally, multiplication by $p^{\frac{1}{q}}$ and using the equivalent definition of Lorentz space quasi-norm, we deduce \eqref{eq-lor-break}.
	\end{proof}

	Suppose that $(\rn, \mathcal{L}^n)$ and $(\Omega, \nu)$ are measure spaces and $T$ is an integral operator defined as
	\begin{align*}
		Tf(x)=\int_{\rn} K(x,y) f(y) \dy \quad \text{ for } x\in \Omega,
	\end{align*}
	where $K:\Omega\times \rn \rightarrow [-\infty,+\infty]$ be  $\nu\times\mathcal{L}^n$-measurable and $f:\rn\rightarrow [0,+\infty]$ be $\mathcal{L}^n$-measurable functions. Now, let us define the partial non-increasing rearrangements below: for $t>0$, we define
	\begin{align*}
		&k_x^*(t)=\inf_{s\geq 0}\{s\,:\, | \{y\in \rn: |K(x,y)|>s\}|\leq t\}	\quad \text{ for } x\in \Omega,\\
		&k_y^*(t)=\inf_{s\geq 0}\{s\,:\, \nu( \{x\in \Omega: |K(x,y)|>s\})\leq t\}	\quad \text{ for } y\in \rn.
	\end{align*}
	Then maximal non-increasing rearrangements are defined as
	\begin{align*}
		k_1^*(t)=\sup_{x\in \Omega} k_x^*(t)	\quad \text{ and } \quad k_2^*(t)=\sup_{y\in \rn} k_y^*(t),
	\end{align*}
	for $t>0$. Similarly, for $t>0$ we define
	\begin{align*}
		(Tf)^*(t)=\inf_{s\geq 0}\{s\,:\, \nu( \{x\in \Omega: |Tf(x)|>s\})\leq t\},
	\end{align*}
	and
	\begin{align*}
		(Tf)^{**}(t)=\frac{1}{t}\int_0^t (Tf)^*(s) \ds. 
	\end{align*}

When $K(x,y)=\frac{1}{\gamma(\alpha)}|x-y|^{\alpha-n}$ for $0<\alpha <n$, $Tf(x)$ is just the Riesz potential defined above.  For this operator one computes, for $t>0$,
	\begin{align*}
		k_x^*(t)=\frac{1}{\gamma(\alpha)}\omega_n^{\frac{n-\alpha}{n}} t^{-\frac{n-\alpha}{n}}, \quad \text{ and hence } \quad  k_1^*(t)=\frac{1}{\gamma(\alpha)}\omega_n^{\frac{n-\alpha}{n}} t^{-\frac{n-\alpha}{n}},
	\end{align*}
	where $\omega_{n}$ is the volume of unit ball in $\rn$.

	Now, under the assumption that a non-negative Borel measure $\nu$ satisfies \eqref{bgc}, for any fixed $y\in \rn$, we have
\begin{align*}
\nu( \{x\in \Omega: |K(x,y)|>s\}) &= \nu(\Omega\cap B_{{(s\gamma(\alpha))}^{-\frac{1}{n-\alpha}}}(y)) \\
&\leq C'_d(s\gamma(\alpha))^{-\frac{d}{n-\alpha}}.
\end{align*}
In particular, the value $s=t^{-\frac{n-\alpha}{d}} {C_d'}^{\frac{n-\alpha}{d}}/\gamma(\alpha)$ is an element of the set
\begin{align*}
\{s\,:\, \nu( \{x\in \Omega: |K(x,y)|>s\})\leq t\}.
\end{align*}	
Therefore, 
\begin{align*}
		k_y^*(t)&=\inf_{s\geq 0}\{s\,:\, \nu( \{x\in \Omega: |K(x,y)|>s\})\leq t\}\\
		&\leq \frac{1}{\gamma(\alpha)}{C_d'}^{\frac{n-\alpha}{d}} t^{-\frac{n-\alpha}{d}}.
\end{align*}
	and so we have
	\begin{align*}
		k_2^*(t)=\sup_{y\in \rn} k_y^*(t)\leq \frac{1}{\gamma(\alpha)}{C_d'}^{\frac{n-\alpha}{d}} t^{-\frac{n-\alpha}{d}}.
	\end{align*}

	A key result that relates the rearrangement of a convolution to the rearrangements of the convolved functions is due to O'Neil \cite{oneil}. Here we use a modified version which can be found in \cite[Lemma 5]{FontanaMorpurgo:na}.
	\begin{lemma}\label{oneil-lemma}
		Assume $0<\alpha<n$ and $0<d\leq n$. Let $f:\Omega\rightarrow [0,+\infty]$ be $\mathcal{L}^n$-measurable function. Then for any
\begin{align}\label{eqn-oneil-lemma-exponent}
			\max\left\{ 1, \frac{n-d}{\alpha} \right\} <r<\frac{n}{\alpha}, \quad \text{ and defining } \quad \theta=\frac{rd}{n-\alpha r},
		\end{align}
there exists a constant $C=C(\alpha,n,d,C_d',r)>0$ such that
		\begin{align}\label{eqn-oneil-lemma-rearrangement}
			(I_\alpha f)^{**}(t)\leq C\max\{\tau^{-\frac{d}{n\theta}},t^{-\frac{1}{\theta}}\}\int_0^\tau f^*(u)u^{-1+\frac{1}{r}} du+\frac{1}{\gamma(\alpha)}\omega_n^{\frac{n-\alpha}{n}}\int_\tau^{|\Omega|}  f^*(u)  u^{-\frac{n-\alpha}{n}} du
		\end{align}
 for all $\tau\, ,\, t>0$.
	\end{lemma}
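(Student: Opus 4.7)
My plan is to run the O'Neil-style truncation argument of \cite{FontanaMorpurgo:na}, adapted so that the trace measure $\nu$ enters only through $k_2^*$. Fix $\tau>0$ and decompose $f = g+h$ with $g := f\chi_{\{|f|>f^*(\tau)\}}$ and $h := f-g$. Then $g$ is supported on a Lebesgue set of measure at most $\tau$, with $g^*(s)=f^*(s)$ for $0<s<\tau$ and $g^*\equiv 0$ on $[\tau,\infty)$; while $\|h\|_{L^\infty}\le f^*(\tau)$ and $h^*(s)=f^*(\max(s,\tau))$.

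For the small part $h$, I would apply the Hardy--Littlewood--P\'olya rearrangement inequality using the Lebesgue-side kernel rearrangement $k_1^*(s)=\gamma(\alpha)^{-1}\omega_n^{(n-\alpha)/n}s^{-(n-\alpha)/n}$ to derive the pointwise bound
\[
|I_\alpha h(x)| \le \frac{\omega_n^{(n-\alpha)/n}}{\gamma(\alpha)} \int_0^{|\Omega|} s^{-(n-\alpha)/n} h^*(s)\,ds.
\]
Splitting the integral at $s=\tau$ and using $h^*(s) = f^*(\tau)$ for $s<\tau$ and $h^*(s)=f^*(s)$ for $s>\tau$ delivers exactly the second summand of \eqref{eqn-oneil-lemma-rearrangement}, plus a residual of the form $\frac{n\omega_n^{(n-\alpha)/n}}{\alpha\gamma(\alpha)} f^*(\tau)\tau^{\alpha/n}$. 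Using the algebraic identity $\alpha/n - 1/r = -d/(n\theta)$ and the monotonicity inequality $f^*(\tau)\tau^{1/r}\le r\int_0^\tau f^*(u)u^{1/r-1}\,du$, I would absorb this residual into the first summand (specifically into the branch with prefactor $\tau^{-d/(n\theta)}$).

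For the large part $g$, the key input is the ball-growth estimate $k_2^*(t)\le \gamma(\alpha)^{-1}(C_d')^{(n-\alpha)/d} t^{-(n-\alpha)/d}$ derived earlier, which says the kernel lies in weak $L^{d/(n-\alpha)}(\Omega,\nu)$ uniformly in $y$. A Marcinkiewicz-type interpolation then yields the weak-type boundedness
\[
I_\alpha: L^{r,1}(\Omega) \to L^{\theta,\infty}(\Omega,\nu), \qquad \theta=\frac{rd}{n-\alpha r},
\]
so that $(I_\alpha g)^*_\nu(t)\le C t^{-1/\theta}\|g\|_{L^{r,1}} = C t^{-1/\theta}\int_0^\tau f^*(u)u^{1/r-1}\,du$. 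The hypothesis $\max\{1,(n-d)/\alpha\}<r$ is exactly what forces $\theta>1$, so the averaging step $(I_\alpha g)^{**}_\nu(t) \le \frac{C\theta}{\theta-1} t^{-1/\theta}\|g\|_{L^{r,1}}$ is legitimate. Since $(I_\alpha g)^{**}_\nu$ is non-increasing in $t$, evaluating this bound at $t_0=\tau^{d/n}$ gives the companion bound $(I_\alpha g)^{**}_\nu(t) \le C\tau^{-d/(n\theta)}\int_0^\tau f^*(u)u^{1/r-1}\,du$ valid for $t\ge\tau^{d/n}$. Combining these two bounds (each tight in its own regime of $t$) produces the $\max\{\tau^{-d/(n\theta)},t^{-1/\theta}\}$ in the first summand, and the decomposition $(I_\alpha f)^{**}(t)\le (I_\alpha g)^{**}(t)+(I_\alpha h)^{**}(t)$ completes the proof.

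The main obstacle is establishing the weak-type bound $I_\alpha:L^{r,1}(\Omega)\to L^{\theta,\infty}(\Omega,\nu)$ with sharp dependence on $C_d'$. This is where the trace geometry enters essentially, through the uniform rearrangement estimate on $k_2^*$, and the Marcinkiewicz interpolation in the Lorentz setting must be carried out carefully. The prescribed range $\max\{1,(n-d)/\alpha\}<r<n/\alpha$ is sharp for the scheme: the lower bound is precisely what makes $\theta>1$ (required for the averaging step), while the upper bound keeps $\theta$ finite.
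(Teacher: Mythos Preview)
The paper does not prove this lemma; it simply cites \cite[Lemma~5]{FontanaMorpurgo:na}. Your sketch is correct and is essentially the argument given there: the $f=g+h$ truncation at level $f^*(\tau)$, the Hardy--Littlewood--P\'olya bound for the small part $h$ via $k_1^*$ (producing the second summand plus a residual absorbed into the first via the identity $\alpha/n-1/r=-d/(n\theta)$), and a weak-type trace estimate $I_\alpha:L^{r,1}\to L^{\theta,\infty}(\Omega,\nu)$ for the large part $g$ driven by the $k_2^*$ bound. Two minor slips that do not affect the argument: the monotonicity step should read $f^*(\tau)\tau^{1/r}\le \tfrac{1}{r}\int_0^\tau f^*(u)u^{1/r-1}\,du$ (not $r\int_0^\tau$), and the $\tau^{-d/(n\theta)}$ branch of the $\max$ is produced entirely by the $h$-residual you computed --- the weak-type bound $Ct^{-1/\theta}\|g\|_{L^{r,1}}$ already sits below the $\max$ for all $t>0$, so no ``second regime'' for $g$ is needed.
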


	We also make use of an extension of Moser's original one-dimensional lemma established in \cite{Adams1988}, the following
	\begin{lemma}\label{adams-lemma}
		Let $1<q<\infty$ and define where $q^\prime=q/(q-1)$.  Let $a(s,t)$ be a non-negative measurable function on $[0,\infty)\times [0,\infty)$ such that for almost every $s,t>0$
		\begin{equation}\label{lem-1-eq-1}
			a(s,t) \leq 1, \quad \text{ when } 0\leq s<t,
		\end{equation}
		\begin{equation}\label{lem-1-eq-2}
			\sup_{t>0}\bigg(\int_{t}^{\infty}a(s,t)^{q^\prime}\ds\bigg)^{1/{q^\prime}}=b<\infty.
		\end{equation}
		Then there is a constant $c=c(q,b)$ such that for every non-negative $\phi:(0,\infty) \to [0,\infty]$ which satisfies 
		\begin{equation}\label{lem-1-eq-3}
			\int_{0}^{\infty}\phi(s)^q \ds\leq 1,
		\end{equation}
		one has
		\begin{equation}\label{lem-1-eq-4}
			\int_0^\infty e^{-F(t)}\dt\leq c,
		\end{equation}
		where
		\begin{equation}\label{lem-1-eq-5}
			F(t)=t-\bigg(\int_{0}^\infty a(s,t)\phi(s)\ds\bigg)^{q^\prime}.
		\end{equation}
		
	\end{lemma}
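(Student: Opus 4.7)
The plan is to follow the Moser--Adams one-dimensional argument from \cite{Moser, Adams1988}. Set $\rho(t):=\int_t^\infty \phi(s)^q\ds$, a non-increasing function on $[0,\infty)$ with $\rho(0)\leq 1$, and write $v(t):=\int_0^\infty a(s,t)\phi(s)\ds$. Splitting the integral defining $v(t)$ at $s=t$ and applying H\"older's inequality on each piece (with hypothesis \eqref{lem-1-eq-1} controlling the first part and \eqref{lem-1-eq-2} the second) yields
\[
v(t) \leq t^{1/q'}(1-\rho(t))^{1/q} + b\,\rho(t)^{1/q}.
\]
A second, discrete H\"older inequality with exponents $q'$ and $q$ applied to the right-hand side then gives the pointwise bound $v(t)^{q'}\leq t+b^{q'}$, whence the uniform lower bound $F(t)\geq -b^{q'}$.

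To extract integrability from this, I would refine the estimate using the elementary inequality $(A+B)^{q'}\leq A^{q'}+q'\,(A+B)^{q'-1}B$ with $A=t^{1/q'}(1-\rho)^{1/q}$, $B=b\rho^{1/q}$, together with the Bernoulli-type bound $A^{q'}=t(1-\rho)^{q'/q}\leq t-c_q\,t\rho$ for $c_q:=\min\{1,\,1/(q-1)\}>0$ (obtained separately in the cases $q\leq 2$, where $q'/q\geq 1$ and $(1-\rho)^{q'/q}\leq 1-\rho$, and $q\geq 2$, where $q'/q\leq 1$ so that concavity applies). This gives the sharper estimate
\[
F(t)\geq c_q\,t\rho(t) \;-\; q'\,2^{q'-1}\,b\,\bigl((t\rho(t))^{1/q}+b^{q'-1}\rho(t)^{1/q}\bigr).
\]
On $\{t\leq b^{q'}\}$ and on $\{t>b^{q'}:\,t\rho(t)\geq y_0\}$, where $y_0=y_0(q,b)$ is chosen so that the second term is dominated by half the first, this either provides a uniform lower bound on $F$ or exponential decay in $t\rho(t)$, respectively, and the corresponding contributions to $\int_0^\infty e^{-F(t)}\dt$ are bounded by a constant depending only on $q$ and $b$.

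The main obstacle is the remaining set $\{t>b^{q'}:\,t\rho(t)<y_0\}$, on which the lower bound on $F$ is merely a constant depending on $q$ and $b$. Here the monotonicity of $\rho$ is used in an essential way: I would reparametrize along its level sets via the change of variable $r=\rho(t)$, which reduces the contribution of this bad set to a one-dimensional integral controllable uniformly in terms of $q$ and $b$. Equivalently, one can perform a layer-cake decomposition of $\int_0^\infty e^{-F(t)}\dt$ and use the non-increasing character of $\rho$ to obtain a Chebyshev-type bound on $|\{t:F(t)<-\lambda\}|$ as a function of $\lambda$, which upon integration against $e^\lambda$ gives a finite bound. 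Concatenating the three contributions yields $\int_0^\infty e^{-F(t)}\dt\leq c(q,b)$, as claimed.
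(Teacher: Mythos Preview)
The paper does not prove this lemma; it is quoted from \cite{Adams1988} as a tool, so there is no in-paper proof to compare against. Your outline follows the standard opening moves of Adams' argument, and your pointwise bound $v(t)\le t^{1/q'}(1-\rho(t))^{1/q}+b\rho(t)^{1/q}$, the consequence $F\ge -b^{q'}$, and the refined inequality
\[
F(t)\ge c_q\,t\rho(t)-q'\,2^{q'-1}b\bigl((t\rho(t))^{1/q}+b^{q'-1}\rho(t)^{1/q}\bigr)
\]
are all correct. The gap is in the region decomposition that follows.

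Take $\phi(s)=c^{1/q}s^{-2/q}\,\chi_{\{s>c\}}$, which satisfies $\int\phi^q=1$ and gives $\rho(t)=c/t$ for $t>c$, hence $t\rho(t)\equiv c$ on $(c,\infty)$. Choosing $c>y_0$ places all of $(\max\{b^{q'},c\},\infty)$ in your region~II; there your refined bound yields only $F(t)\ge c_q c/2-C'$, a constant, so the contribution $\int_{\mathrm{II}}e^{-F}\,dt$ is bounded above only by a constant times the infinite measure of region~II. Choosing instead $c<y_0$ makes region~II empty and puts the same infinite tail into region~III, where again you only have a constant lower bound on $F$. Reparametrising by $r=\rho(t)$ does not help: for this $\rho$ the Jacobian is $dt=-(t^2/c)\,dr$, which blows up. The layer-cake alternative fails for the same reason, since with $H(t):=t-(t^{1/q'}(1-\rho)^{1/q}+b\rho^{1/q})^{q'}$ one computes $H(t)\to q'(c/q-bc^{1/q})$ as $t\to\infty$, so $|\{H<\sigma\}|=\infty$ for all large $\sigma$ and the inclusion $\{F<\sigma\}\subset\{H<\sigma\}$ is useless.

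The loss happens precisely when you apply H\"older to $\int_0^t a(s,t)\phi(s)\,ds$: the resulting bound $t^{1/q'}(1-\rho)^{1/q}$ can exceed the true value $\int_0^t\phi$ by a factor tending to infinity (in the example above with $q=2$, $\int_0^t\phi\sim c^{1/2}\log t$ while the H\"older bound is $\sim t^{1/2}$). Adams' actual argument does not try to control $\int e^{-F}$ through a single pointwise lower bound depending only on $\rho(t)$; after establishing $F\ge -c_1$ and a uniform bound on $|E_\lambda|$ for $\lambda\ge 0$, he handles the range $\lambda<0$ via the structural inequality $|E_\lambda|\le -\lambda+|E_0|$, which is proved by a translation argument using the full function $\phi$, not merely $\rho(t)$. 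Your sketch is missing this additional ingredient.
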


	\section{Proofs of the Main Results}\label{main_results}
In this section, we prove Theorems \ref{main-thm-0},  \ref{main-thm}, and  \ref{main-thm-1}.  Following the program of Adams from \cite{Adams1988}, we begin with the establishment of sharp potential estimates in the
	\begin{proof}[Proof of Theorem \ref{main-thm}]
		It suffices to establish the estimate for $\beta = \left(\frac{d}{n}\right)^{\frac{1}{q^\prime}}\gamma(\alpha)\,\omega_n^{-\frac{n-\alpha}{n}}$.  Let $r$ be any admissible exponent in the range specified in \eqref{eqn-oneil-lemma-exponent} and choose $\tau=t^{\frac{n}{d}}$ in \eqref{eqn-oneil-lemma-rearrangement}. Then we have
		\begin{align*}
			( I_\alpha f)^*(t)&\leq (I_\alpha f)^{**}(t)\leq Ct^{-\frac{1}{\theta}}\int_0^{t^{\frac{n}{d}}} f^*(u)u^{-1+\frac{1}{r}} du+\frac{1}{\gamma(\alpha)}\omega_n^{\frac{n-\alpha}{n}}\int_{t^{\frac{n}{d}}}^{|\Omega|}  f^*(u)  u^{-\frac{n-\alpha}{n}} du\\&=Ct^{-\frac{1}{\theta}}\int_0^{t} f^*(s^{\frac{n}{d}})s^{-1+\frac{n}{dr}} ds+\frac{n}{d\gamma(\alpha)}\omega_n^{\frac{n-\alpha}{n}}\int_{t}^{|\Omega|^{\frac{d}{n}}}  f^*(s^{\frac{n}{d}})  s^{-1+\frac{\alpha}{d}} ds
		\end{align*}
		Now, take $t_1=\max\{\nu(\Omega), |\Omega|^{\frac{d}{n}}\}$ and we deduce
		\begin{align*}
			&\int_\Omega \exp \left( \left(\frac{d}{n}\right)^{\frac{1}{q^\prime}}\gamma(\alpha)\omega_n^{-\frac{n-\alpha}{n}}|I_\alpha f(x)| \right)^{q^\prime} \dn \\&=	\int_0^{\nu(\Omega)} \exp \left( \left(\frac{d}{n}\right)^{\frac{1}{q^\prime}}\gamma(\alpha)\omega_n^{-\frac{n-\alpha}{n}}(I_\alpha f)^*(t) \right)^{q^\prime} \dt\\& \leq \int_0^{\nu(\Omega)} \exp \left( \left(\frac{d}{n}\right)^{\frac{1}{q^\prime}}\gamma(\alpha)\omega_n^{-\frac{n-\alpha}{n}}(I_\alpha f)^{**}(t) \right)^{q^\prime} \dt\\&\leq \int_0^{t_1} \exp \left( Ht^{-\frac{1}{\theta}}\int_0^{t} f^*(s^{\frac{n}{d}})s^{-1+\frac{n}{dr}} \ds + \left(\frac{n}{d}\right)^{\frac{1}{q}}\int_{t}^{t_1}  f^*(s^{\frac{n}{d}})  s^{-1+\frac{\alpha}{d}} \ds \right)^{q^\prime} \dt,
		\end{align*}
		where $H$ is some suitable generic constant. Next, we apply the changes of variables $s=e^{-x},t=e^{-y}$ and define $y_1=-\log t_1$. This yields
		\begin{align*}
			&\int_0^{t_1} \exp  \left( Ht^{-\frac{1}{\theta}}\int_0^{t} f^*(s^{\frac{n}{d}})s^{-1+\frac{n}{dr}} \ds + \left(\frac{n}{d}\right)^{\frac{1}{q}}\int_{t}^{t_1}  f^*(s^{\frac{n}{d}})  s^{-1+\frac{\alpha}{d}} \ds \right)^{q^\prime} dt\\&\leq \int_{y_1}^{\infty} \exp\left[ \left( He^{\frac{y}{\theta}}\int_y^{\infty} f^*(e^{-\frac{nx}{d}})e^{-\frac{nx}{dr}} \dx +\left(\frac{n}{d}\right)^{\frac{1}{q}} \int_{y_1}^{y}  f^*(e^{-\frac{nx}{d}})  e^{-\frac{\alpha x}{d}} \dx \right)^{q^\prime} -y\right]\dy.
		\end{align*}
		We next define $\phi(x)=\left(\frac{n}{d}\right)^{\frac{1}{q}}  f^*(e^{-\frac{nx}{d}})  e^{-\frac{\alpha x}{d}}$ on $[y_1,\infty)$.  By the change of variables $s=e^{-\frac{nx}{d}}$, we have
		\begin{align*}
			\|\phi\|_{L^q(y_1,\infty)}^q=\frac{n}{d} \int_{y_1}^{\infty}  f^*(e^{-\frac{nx}{d}})  e^{-\frac{\alpha x}{d}} \dx \leq \int_0^{|\Omega|}{f^*}^q(s)s^{\frac{\alpha q}{n}-1}\ds=\|\,|f|\,\|_{L^{\frac{n}{\alpha}, q}(\Omega)}\leq 1.
		\end{align*}
		
		Therefore, to establish \eqref{eqn-main-thm}, we need to verify
		\begin{align*}
			\int_{y_1}^{\infty} \exp\left[ \left( H\int_y^{\infty} \phi(x) e^{-\frac{nx}{dr}+\frac{\alpha x}{d}+\frac{y}{\theta}} \dx + \int_{y_1}^{y}  \phi(x) \dx \right)^{q^\prime} -y\right]dy \leq C,
		\end{align*}
		where $H$ is a suitable constant. 
		
		Define
		\[
		g(x,y)= \begin{cases}1 & \text { if } y_1\leq x\leq y; \\ H e^{-\frac{nx}{dr}+\frac{\alpha x}{d}+\frac{y}{\theta}}& \text { if } y_1\leq y<x<\infty; \\ 0 & \text{ elsewhere},\end{cases}
		\]
		and 
		\[
		F(y)= \begin{cases}y- \left(\int_{y_1}^\infty g(x,y)\phi(x) \dx\right)^{q^\prime} & \text { if } y_1\leq y <\infty; \\ 0 & \text { elsewhere}.\end{cases}
		\]
		We claim $a=g$ and $\phi$ satisfy the conditions of Lemma \ref{adams-lemma}.   The inequality \eqref{lem-1-eq-1} follows easily from the definition of $g$, while the preceding computation shows $\phi$ satisfies \eqref{lem-1-eq-3}. Concerning \eqref{lem-1-eq-2}, this follows from the computation
		\begin{align*}
			&\sup_{y>0}\bigg(\int_{y}^{\infty}H^{q^\prime} e^{-\frac{nq^\prime x}{dr}+\frac{\alpha q^\prime x}{d}+\frac{q^\prime y}{\theta}}\dx\bigg)^{1/{q^\prime}}\\&=H^{q^\prime}\sup_{y>0}\bigg(\int_{y}^{\infty} e^{\frac{q^\prime (y-x)}{\theta}}\dx\bigg)^{1/{q^\prime}}\\&=\frac{\theta}{q^\prime}\,H^{q^\prime}<\infty,
		\end{align*}
		which makes use of the identity $\theta=\frac{rd}{n-\alpha r}$  from \eqref{eqn-oneil-lemma-exponent}. Therefore \eqref{eqn-main-thm} follows from an application of Lemma \ref{adams-lemma}.
		
		We next show the optimality of the constant.  In fact, it would suffice to show that one cannot improve the constant for Theorem \ref{main-thm-0} or Theorem \ref{main-thm-1}, since an improvement of the constant in this inequality would yield an improvement of the constant in both of those inequalities, which would result in a contradiction.  However, the test functions to show optimality in this inequality turn out to be useful in the case $q=+\infty$, allowing us to resolve the analogous sharpness of the constant for the Riesz potential, but not for the differential operators, and so we here give the construction.
		
		  Without loss of generality, we let $\Omega=B$, the unit ball centered at the origin. Otherwise, up to rescaling and translating we can assume $B\subset\subset \Omega$. Let $B_r$ be the ball of radius $r$ with $0<r<1$ and centered at the origin. Assume that for some $\beta>0$, there holds
		\begin{align}\label{eqn-th2.5-1}
			\int_B \exp \left(\frac{\beta|I_\alpha f(x)|}{\|\, |f|\, \|_{L^{\frac{n}{\alpha},q}(B)}} \right)^{q^\prime} \dn \leq C
		\end{align}
		for every $f\in L^{\frac{n}{\alpha},q}(B)$.	Now, for $0<r<1$ and $x\in B$, let us define
		\begin{align*}
			f_r(x)=
			\left\{ \begin{array}{ll}
				\frac{|x|^{-\alpha}}{n\omega_{n}\log(1/r)}  & \text{ for } r<|x|<1; \\
				0 & \text{ otherwise}.
			\end{array} \right.
		\end{align*}
We then follow the computation in \cite[Theorem 2]{Adams1988}, with some additional details here for the convenience of the reader.  In the region $|x|\leq r$, we have
		\begin{align*}
			\gamma(\alpha)&I_{\alpha}f_r(x)=(n\omega_{n})^{-1}(\ln 1/r)^{-1}\int_{r<|y|<1}|x-y|^{\alpha-n}|y|^{-\alpha} \dy\\&=(n\omega_{n})^{-1}(\ln 1/r)^{-1}\bigg[\int_{|y|\leq 1}|x-y|^{\alpha-n}|y|^{-\alpha}\dy-\int_{|y|\leq r}|x-y|^{\alpha-n}|y|^{-\alpha}\dy\bigg]\\&=(\ln 1/r)^{-1}\left[\I(1)-\I(r)\right],
		\end{align*}
		where for any radius $R>0$, the integral is defined as
		\begin{align*}
			\I(R):=\frac{1}{n\omega_{n}} \int_{|z|\leq R}|x-z|^{\alpha-n}|z|^{-\alpha} \dz.
		\end{align*}
		
		We will evaluate the integral $\I(R)$, a computation which goes back to Fuglede \cite[p.~7]{Fuglede}. Introducing spherical coordinates, we write $\dz=|z|^{n-1} {\rm d}|z| \, {\rm d} w$, where $w$ is in the sphere. Now, because of homogeneity, and writing $t=|x| /|z|$, we have		
		\begin{align*}
			\I(R)=	\frac{1}{n\omega_{n}} \int_{|z|\leq R}|x-z|^{\alpha-n}|z|^{-\alpha} \dz=\int_{\frac{\rho}{R}}^{\infty} t^{-1} u_{\alpha}(t) \dt,	
		\end{align*}
		where $\rho=|x|$, and where $u_{x}(l)$ denotes the \textit{Riesz potential} of order $\alpha$ of the uniform distribution of unit mass on the unit sphere in $\rn$, evaluated at a point of distance $t$ from the origin. One observes that $u_{\alpha}(t)$ is differentiable for $t>1$ and for $0 \leq t<1$, and integrable over a neighbourhood of $t=1$. Moreover,
		\begin{align*}
			u_{\alpha}(0)=1 ; \quad u_{\alpha}^{\prime}(0)=0 ; \quad u_{\alpha}(t)=O\left(t^{\alpha-n}\right), \quad \text{ for }t \rightarrow+\infty.
		\end{align*}

		Therefore, the function $v_{\alpha}$ defined by		
		$$
		v_{\alpha}(t)= \begin{cases}t^{-1} u_{\alpha}(t) & \text { for } t>1; \\ t^{-1}\left(u_{\alpha}(t)-1\right) & \text { for } 0<t<1,\end{cases}
		$$
		is bounded near $t=0$ and integrable over $(0,+\infty)$. We now obtain
		\begin{align*}
			\I(R)=\log \frac{R}{\rho}+V\left(\frac{\rho}{R}\right),
		\end{align*}
		where $V(t)=\int_{t}^{\infty} v_{\alpha}(s) \ds$ is a continuous function and admits the limit
		\begin{align*}
			\lim_{t\rightarrow 0} V(t) = \int_{0}^{\infty} v_{\alpha}(t) \dt.
		\end{align*}
		
Therefore,
		\begin{align*}
			\gamma(\alpha)I_{\alpha}f_r(x)&= (\ln 1/r)^{-1} \bigg[\ln \frac{1}{|x|}+V\left(|x|\right)-\ln \frac{r}{|x|}-V\left(\frac{|x|}{r}\right)\bigg]\\&=(\ln 1/r)^{-1} \bigg[(\ln 1/r)+\int_{|x|}^{|x|/r} v_{\alpha}(t) \dt\bigg].
		\end{align*}
		Now, for every $\epsilon>0$ there exists sufficiently small $r_0>0$ such that for all $0<r<r_0$, there holds
		\begin{align*}
			\bigg|(\ln 1/r)^{-1}	\int_{|x|}^{|x|/r} v_{\alpha}(t) \dt\bigg|\leq (\ln 1/r_0)^{-1}\int_{0}^{\infty} |v_{\alpha}(t)| \dt <\epsilon .
		\end{align*}
In particular,
		\begin{align*}
			\gamma(\alpha)I_{\alpha}f_r(x)\geq 1-\epsilon, \quad \text{ whenever }\quad |x|\leq r \quad \text{and} \quad \forall r\leq r_0.
		\end{align*}
		This gives that for any $\epsilon>0$, there exist $r_0=r_0(\epsilon)$, such that for every $0<r<r_0$, there holds
		\begin{align*}
			\gamma(\alpha)	I_\alpha [(1-\epsilon)^{-1}f_r](x)\geq 1, \quad \text{ whenever }\,\, |x|\leq r<r_0.
		\end{align*}
		
		Now by the definition of non-increasing rearrangement, we have: For $s>0$
		\begin{align*}
			(1-\epsilon)^{-1}f_r^*(s)=
			\left\{ \begin{array}{ll}
				0 & \text{ if } s\geq \omega_n(1-r^n); \\
				\frac{(1-\epsilon)^{-1}}{n\omega_{n}\log(1/r)} \frac{1}{(s\omega_{n}^{-1}+r^n)^{\frac{\alpha}{n}}} &\text{ if } s<\omega_n(1-r^n).
			\end{array} \right.
		\end{align*}
Utilizing this we can estimate
		\begin{align*}
			&\|\, |(1-\epsilon)^{-1}f_r|\, \|^{q}_{L^{\frac{n}{\alpha},q}(B)}\\&=\int_{0}^{\omega_{n}}s^{\frac{\alpha q}{n}-1} {\left( (1-\epsilon)^{-1}f_r\right)^*}^q(s) \ds\\&= \frac{(1-\epsilon)^{-q}}{n^q\omega^q_{n}(\log(1/r))^q} \int_{0}^{\omega_{n}(1-r^n)} \frac{s^{\frac{\alpha q}{n}-1}}{(s\omega_{n}^{-1}+r^n)^{\frac{\alpha q}{n}}} \ds \\&= \frac{\omega^\frac{\alpha q}{n}_{n}(1-\epsilon)^{-q}}{n^q\omega^q_{n}(\log(1/r))^q} \int_{0}^{r^{-n}-1} \frac{t^{\frac{\alpha q}{n}-1}}{(t+1)^{\frac{\alpha q}{n}}} \dt \\& \leq \frac{\omega^\frac{\alpha q}{n}_{n}(1-\epsilon)^{-q}}{n^q\omega^q_{n}(\log(1/r))^q} \left[ \int_0^1 t^{\frac{\alpha q}{n}-1}\dt +\int_1^{r^{-n}} \frac{1}{t} \dt\right]\\&=\frac{\omega^\frac{(\alpha-n) q}{n}_{n}(1-\epsilon)^{-q}}{n^{q-1}(\log(1/r))^q} \left[ \frac{1}{\alpha q} +  \log(1/r)\right] \\&\leq n^{1-q} \omega^\frac{(\alpha-n) q}{n}_{n}(1-\epsilon)^{-q} (1+\delta)^{\frac{q}{q^\prime}} (\log(1/r))^{1-q},
		\end{align*}
		for every $\delta>0$ and $r\leq R(r_0,\delta)$. Hence, using this and making use of \eqref{eqn-1-th2.6} we deduce
		\begin{align*}
			C&\int_B \exp \left(\frac{\beta|I_\alpha[(1-\epsilon)^{-1}f_r](x)|}{\|\, |(1-\epsilon)^{-1}f_r|\, \|_{L^{\frac{n}{\alpha},q}(B)}} \right)^{q^\prime} \dn  \\& \geq 	\int_{B_{r}} \exp \left(  \frac{\beta\, \gamma^{-1}(\alpha)}{n^{-\frac{1}{q^\prime}} \omega^\frac{(\alpha-n) }{n}_{n}(1-\epsilon)^{-1} (1+\delta)^{\frac{1}{q^\prime}} (\log(1/r))^{-\frac{1}{q^\prime}}} \right)^{q^\prime} \dn \\& \geq 	C_0 \,r^d \exp \left(  \frac{\beta^{q^\prime}\, \gamma^{-{q^\prime}}(\alpha)}{n^{-1} \omega^\frac{(\alpha-n){q^\prime} }{n}_{n}(1-\epsilon)^{-{q^\prime}} (1+\delta)}\, \log(1/r)\right)\\&= C_0 \left(  \frac{1}{r}\right)^{\frac{\beta^{q^\prime}\, \gamma^{-{q^\prime}}(\alpha)}{n^{-1} \omega^\frac{(\alpha-n){q^\prime} }{n}_{n}(1-\epsilon)^{-{q^\prime}} (1+\delta)}-d}.
		\end{align*}
		
To ensure finiteness of this quantity as $r\to 0$, we must have
		\begin{align*}
			&\frac{\beta^{q^\prime}\, \gamma^{-{q^\prime}}(\alpha)}{n^{-1} \omega^\frac{(\alpha-n){q^\prime} }{n}_{n}(1-\epsilon)^{-{q^\prime}} (1+\delta)}-d \leq 0 \\& \implies \beta \leq (1+\delta)^{\frac{1}{q^\prime}}(1-\epsilon)^{-1}\left(\frac{d}{n}\right)^{\frac{1}{q^\prime}}\gamma(\alpha)\,\omega_n^{-\frac{n-\alpha}{n}}.
		\end{align*}
		Since, $\delta$ and $\epsilon$ are arbitrary, we first send $\epsilon \to 0$ and then $\delta \to 0$ to obtain 
		\begin{align*}
			\beta \leq \left(\frac{d}{n}\right)^{\frac{1}{q^\prime}}\gamma(\alpha)\,\omega_n^{-\frac{n-\alpha}{n}},
		\end{align*}
		which is the required result.
	\end{proof}

We next give the 	
	
	\begin{proof}[Proof of Theorem \ref{main-thm-0}]
		To establish the positive result, it suffices to prove the inequality for $\beta = d^{\frac{1}{q^\prime}} n^{\frac{1}{q}}\omega_n^{\frac{k}{n}}\sqrt{\ell^k_n}$.  From \eqref{pt-wise-org}, we have
		\begin{align*}
			d^{\frac{1}{q^\prime}} n^{\frac{1}{q}}\,\omega_n^{\frac{k}{n}}\,\sqrt{\ell^k_n}\,|u(x)|	\leq  \left(\frac{d}{n}\right)^{\frac{1}{q^\prime}}\gamma(k)\,\omega_n^{-\frac{n-k}{n}}\,I_k \left( |\nabla^k u| \right) (x),
		\end{align*}
		and the claimed inequality follows from the inequality established in Theorem~\ref{main-thm}.
		
We next turn our attention to optimality.  Without loss of generality, by translation and dilation, we assume $\Omega=B_2$, a ball of radius $2$ and centered at the origin.  Assume that for some $\beta$, there holds 
		\begin{align}\label{eqn-1-th2.6}
			\int_{B_2} \exp \left(  \frac{\beta|u(x)|}{\|\, |\nabla^k u|\, \|_{L^{\frac{n}{k},q}(B_2)}} \right)^{q^\prime} \dn \leq C,
		\end{align}
		for every $u\in W_0^k L^{\frac{n}{k},q}(B_2)$. We want to show that $\beta \leq d^{\frac{1}{q^\prime}} n^{\frac{1}{q}}\,\omega_n^{\frac{k}{n}}\,\sqrt{\ell^k_n}$, and to this end we consider the test functions $\{u_\epsilon\}_{\epsilon>0}\subset C_c^\infty(\rn)$, described in \cite[Proposition 2.1]{SS}. They satisfy
		\begin{align}
			&u_\epsilon(x)=\log (1/|x|) \text{ on } B_1\setminus B_\epsilon, \label{a}\\
			&\|u_\epsilon\|_{L^\infty(\rn)}=u_\epsilon(0)=\log(1/\epsilon)+O(1),\label{b}\\
			&\text{supp}(u_\epsilon)\subset B_2,\label{c}\\
			&\|\,|\nabla^k u_\epsilon|\,\|_{L^\infty(B_\epsilon)}=O(\epsilon^{-k}), \quad 1\leq k \leq n,\label{d}\\
			&\|\,|\nabla^k u_\epsilon|\,\|_{L^\infty(B_2\setminus B_1)}= O(1), \quad 1\leq k \leq n.\label{e}
		\end{align}

		Here we write $C$ as a generic constant everywhere. From the definition of non-increasing rearrangement \eqref{nonincreasing_rearrangement}, we have that for two non-negative functions $f$ and $g$, if $f(x)\leq g(x)$ for $x\in \Omega$, there holds
		\begin{align*}
			f^*(s)\leq g^*(s) \quad \text{ for } s>0.
		\end{align*}

		Now from \eqref{d}, we have $(\chi_{B_\epsilon}|\nabla^k u_\epsilon|)^*(s)\leq (C \chi_{B_\epsilon}\epsilon^{-k})^*(s)$ for $s>0$. Meanwhile
		\begin{equation*}
			(C\chi_{B_\epsilon} \epsilon^{-k})^*(s)=
			\left\{ \begin{array}{ll}
				0 & \text{ if } s\geq \omega_{n}\epsilon^n; \\
				C\epsilon^{-k} &\text{ if } s<\omega_{n}\epsilon^n.
			\end{array} \right.
		\end{equation*}
		Therefore, we find
		\begin{align*}
			\|\,|\nabla^k u_\epsilon|\,\|_{L^{\frac{n}{k},q}(B_\epsilon)}^q &=	\int_0^{\epsilon^n \omega_n} s^{\frac{kq}{n}-1} {|\nabla^k u_\epsilon|^*}^q(s)\ds\\&\leq C \epsilon^{-kq} \int_0^{\epsilon^n \omega_n} s^{\frac{kq}{n}-1} \ds = O(1).
		\end{align*}

		Similarly using \eqref{e}, we observe $(\chi_{B_2\setminus B_1}|\nabla^k u_\epsilon|)^*(s)\leq (C\chi_{B_2\setminus B_1})^*(s)$ for $s>0$, and we have
		\begin{equation*}
			(C\chi_{B_2\setminus B_1} )^*(s)=
			\left\{ \begin{array}{ll}
				0 & \text{ if } s\geq \omega_n(2^n-1); \\
				C &\text{ if } s<\omega_n(2^n-1).
			\end{array} \right.
		\end{equation*}
		Hence, using this we find
		\begin{align*}
			\|\,|\nabla^k u_\epsilon|\,\|_{L^{\frac{n}{k},q}(B_2\setminus B_1)}^q&=	\int_0^{\omega_n(2^n-1)} s^{\frac{kq}{n}-1} {|\nabla^k u_\epsilon|^*}^q(s)\ds\\&\leq C \int_0^{\omega_n(2^n-1)} s^{\frac{kq}{n}-1} \ds = O(1).
		\end{align*}

		Next, from \eqref{a}, we know $|\nabla^k u_\epsilon| = |\nabla^k \log(|x|)|$ in $B_1\setminus B_\epsilon$. Also for $1\leq k<n$, we know
		\begin{align*}
			|\nabla^k \log(|x|)|=\frac{\sqrt{\ell^k_n}}{|x|^{k}} \quad \text{ for } x\neq 0.
		\end{align*}
		Making use of this fact we deduce
		\begin{align*}
			\|\,|\nabla^k u_\epsilon|\,\|_{L^{\frac{n}{k},q}(B_1\setminus B_\epsilon)}^q=	\int_0^{\omega_n(1-\epsilon^n)} s^{\frac{kq}{n}-1} {\left| \chi_{B_1 \setminus B_\epsilon}(\cdot) \frac{\sqrt{\ell^k_n}}{|\cdot|^{k}}\right|^*}^q(s)\ds.
		\end{align*}
		For $s>0$, we know
		\begin{align*}
			\left|\chi_{B_1 \setminus B_\epsilon}(\cdot) \sqrt{\ell^k_n}\,|\cdot|^{-k}\right|^*(s) = \inf_{t\geq 0}\{t\,:\, \mathcal{L}^n( \{x\in B_1\setminus B_\epsilon: |\sqrt{\ell^k_n}\,|x|^{-k}|>t\})\leq s\}.
		\end{align*}
		Therefore,
		\begin{equation*}
			\left|\chi_{B_1 \setminus B_\epsilon}(\cdot) \sqrt{\ell^k_n}\,|\cdot|^{-k}\right|^*(s)=
			\left\{ \begin{array}{ll}
				0 & \text{ if } s\geq \omega_n(1-\epsilon^n); \\
				\frac{\sqrt{\ell^k_n}}{(s\omega_{n}^{-1}+\epsilon^n)^{\frac{k}{n}}} &\text{ if } s<\omega_n(1-\epsilon^n).
			\end{array} \right.
		\end{equation*}
		Hence, using this for small $\epsilon$, we have
		\begin{align*}
			&\|\,|\nabla^k u_\epsilon|\,\|_{L^{\frac{n}{k},q}(B_1\setminus B_\epsilon)}^q \\&=	\int_0^{\omega_n(1-\epsilon^n)} s^{\frac{kq}{n}-1} {|\nabla^k u_\epsilon|^*}^q(s)\ds\\&=\left(\sqrt{\ell^k_n}\right)^q\int_0^{\omega_n(1-\epsilon^n)}  \frac{s^{\frac{kq}{n}-1}}{(s\omega_{n}^{-1}+\epsilon^n)^{\frac{kq}{n}}} \ds\\&=\left(\omega_n^{\frac{k}{n}}\sqrt{\ell^k_n}\right)^q\int_0^{\epsilon^{-n}-1}  \frac{t^{\frac{kq}{n}-1}}{(t+1)^{\frac{kq}{n}}} \dt \\& \leq \left(\omega_n^{\frac{k}{n}}\sqrt{\ell^k_n}\right)^q\int_0^{1}  t^{\frac{kq}{n}-1} \dt + \left(\omega_n^{\frac{k}{n}}\sqrt{\ell^k_n}\right)^q\int_1^{\epsilon^{-n}} \frac{1}{t} \dt \\& = O(1) + \left( n^{\frac{1}{q}}\omega_n^{\frac{k}{n}}\sqrt{\ell^k_n}\right)^q \log \left(\frac{1}{\epsilon}\right).
		\end{align*}
		
		Therefore, with the help of Lemma~\ref{lor-break} and a combination of these three estimates, we deduce 
		\begin{align*}
			\|\,|\nabla^k u_\epsilon|\,\|_{L^{\frac{n}{k},q}(B_2)} &\leq  n^{\frac{1}{q}}\omega_n^{\frac{k}{n}}\sqrt{\ell^k_n} \left(\log \left(\frac{1}{\epsilon}\right)\right)^{1/q} + O(1)\\& \leq (1+\delta)^{\frac{1}{q^\prime}}\left( n^{\frac{1}{q}}\omega_n^{\frac{k}{n}}\sqrt{\ell^k_n}\right) \left(\log \left(\frac{1}{m\epsilon}\right)\right)^{1/q},
		\end{align*}
		holds for every $\delta>0$, $m \in \mathbb{N}$, and $\epsilon\leq \epsilon_0(\delta,m)$. Also, on $B_{m\epsilon}\setminus B_{\epsilon}$, when $m\epsilon \leq 1$  we have $u_\epsilon(x)=\log (1/|x|) \geq \log (1/m\epsilon)$. Furthermore, using \eqref{condition-1} we have, for $\epsilon>0$ sufficiently small, 
		\begin{align*}
			\nu(B_{m\epsilon}\setminus B_{\epsilon})\geq (C_0m^d-C_d')\epsilon^d=C(\nu) \epsilon^d.
		\end{align*}
		Hence, using this and by \eqref{eqn-1-th2.6} we deduce, for $\epsilon$ sufficiently small that
		\begin{align*}
			C&\geq \int_{B_2} \exp \left(  \frac{\beta|u_\epsilon(x)|}{\|\, |\nabla^k u_\epsilon|\, \|_{L^{\frac{n}{k},q}(B_2)}} \right)^{q^\prime} \dn \\& \geq 	\int_{B_{m\epsilon}\setminus B_{\epsilon}} \exp \left(  \frac{\beta|u_\epsilon(x)|}{(1+\delta)^{\frac{1}{q^\prime}}\left( n^{\frac{1}{q}}\omega_n^{\frac{k}{n}}\sqrt{\ell^k_n}\right) \left( \log \left(\frac{1}{m\epsilon}\right)\right)^{\frac{1}{q}}} \right)^{q^\prime} \dn \\& \geq C(\nu)\, \epsilon^d \exp \left(  \frac{\beta\log \left(\frac{1}{m\epsilon}\right)}{(1+\delta)^{\frac{1}{q^\prime}}\left( n^{\frac{1}{q}}\omega_n^{\frac{k}{n}}\sqrt{\ell^k_n}\right) \left( \log \left(\frac{1}{m\epsilon}\right)\right)^{\frac{1}{q}}} \right)^{q^\prime}\\& = \frac{C(\nu)}{2^d} \left(  \frac{1}{m\epsilon}\right)^{ \frac{\beta^{q^\prime}}{(1+\delta)\left( n^{\frac{1}{q}}\omega_n^{\frac{k}{n}}\sqrt{\ell^k_n}\right)^{q^\prime} }-d}.
		\end{align*}
		
For the above term to be finite as $\epsilon\to 0$, we must have
		\begin{align*}
			&\frac{\beta^{q^\prime}}{(1+\delta)\left( n^{\frac{1}{q}}\omega_n^{\frac{k}{n}}\sqrt{\ell^k_n}\right)^{q^\prime} }-d \leq 0 \\& \implies \beta \leq (1+\delta)^{\frac{1}{q^\prime}}d^{\frac{1}{q^\prime}} n^{\frac{1}{q}}\omega_n^{\frac{k}{n}}\sqrt{\ell^k_n}.
		\end{align*}
		Since $\delta$ is arbitrary, we then send $\delta \to 0$ and obtain $\beta \leq d^{\frac{1}{q^\prime}} n^{\frac{1}{q}}\omega_n^{\frac{k}{n}}\sqrt{\ell^k_n}$, which is the desired result.
	\end{proof}

	The last result of this section is the extension of Fontana-Morpurgo's result to the Lorentz scale/Alberico's result to the case of general measures, our Theorem~\ref{main-thm}.
	
	\begin{proof}[Proof of Theorem \ref{main-thm-1}]
		By \eqref{pt-wise}, making use of the definition of $\beta_{n,k,q}$ in \eqref{beta-n-k}, we have
\begin{align*}
			\beta_{n,k,q}\,|u(x)|	\leq \left(\frac{d}{n}\right)^{\frac{1}{q^\prime}}\gamma(k)\,\omega_n^{-\frac{n-k}{n}}I_k \left( |D^k u| \right) (x),
\end{align*}
		and the claimed inequality then follows as a consequence of Theorem~\ref{main-thm}.
		
		To prove the optimality of the constant in \eqref{eqn-adams-lorentz}, we argue as in the proof of Theorem \ref{main-thm-0}, mutatis mutandis.  Again, without loss of generality $\Omega =B_2$, while we utilize the same test functions.  Here the only change is that in place of the relation \eqref{natural} from Lemma \ref{log_constants}, we utilize the relations \eqref{adams} and \eqref{potential}, depending on parity.  Then the same argument invoking Lemma~\ref{lor-break} yields the bound 
		\begin{align*}
			\|\,|D^k u_\epsilon|\,\|_{L^{\frac{n}{k},q}(B_2)} &\leq  n^{\frac{1}{q}-1}\omega_n^{\frac{k}{n}-1}\gamma(k) \left(\log \left(\frac{1}{\epsilon}\right)\right)^{1/q} + O(1)\\& \leq (1+\delta)^{\frac{1}{q^\prime}}\left( n^{\frac{1}{q}-1}\omega_n^{\frac{k}{n}-1}\gamma(k)\right) \left(\log \left(\frac{1}{m\epsilon}\right)\right)^{1/q},
		\end{align*}
for $k$ even and
\begin{align*}
			\|\,|D^k u_\epsilon|\,\|_{L^{\frac{n}{k},q}(B_2)} &\leq  n^{\frac{1}{q}-1}\omega_n^{\frac{k}{n}-1}\tilde{\gamma}(k-1) \left(\log \left(\frac{1}{\epsilon}\right)\right)^{1/q} + O(1)\\& \leq (1+\delta)^{\frac{1}{q^\prime}}\left( n^{\frac{1}{q}-1}\omega_n^{\frac{k}{n}-1}\tilde{\gamma}(k-1)\right) \left(\log \left(\frac{1}{m\epsilon}\right)\right)^{1/q},
		\end{align*}
for $k$ odd.  These inequalities hold for every $\delta>0$, $m \in \mathbb{N}$, and $\epsilon\leq \epsilon_0(\delta,m)$. Again, on $B_{m\epsilon}\setminus B_{\epsilon}$, when $m\epsilon \leq 1$  we have $u_\epsilon(x)=\log (1/|x|) \geq \log (1/m\epsilon)$, and the measure $\nu$ satisfies a lower bound on this annulus.  

When $k$ is even this leads to the following computation analogous to that in the preceding theorem
\begin{align*}
			C&\geq \int_{B_2} \exp \left(  \frac{\beta|u_\epsilon(x)|}{\|\, |D^k u_\epsilon|\, \|_{L^{\frac{n}{k},q}(B_2)}} \right)^{q^\prime} \dn \\& \geq 	\int_{B_{m\epsilon}\setminus B_{\epsilon}} \exp \left(  \frac{\beta|u_\epsilon(x)|}{(1+\delta)^{\frac{1}{q^\prime}}\left( n^{\frac{1}{q}-1}\omega_n^{\frac{k}{n}-1}\gamma(k)\right) \left(\log \left(\frac{1}{m\epsilon}\right)\right)^{1/q}} \right)^{q^\prime} \dn \\& \geq C(\nu)\, \epsilon^d \exp \left(  \frac{\beta\log \left(\frac{1}{m\epsilon}\right)}{(1+\delta)^{\frac{1}{q^\prime}}\left( n^{\frac{1}{q}-1}\omega_n^{\frac{k}{n}-1}\gamma(k)\right) \left(\log \left(\frac{1}{m\epsilon}\right)\right)^{1/q}} \right)^{q^\prime}\\& = \frac{C(\nu)}{2^d} \left(  \frac{1}{m\epsilon}\right)^{ \frac{\beta^{q^\prime}}{(1+\delta)\left( n^{\frac{1}{q}-1}\omega_n^{\frac{k}{n}-1}\gamma(k)\right)^{q^\prime} }-d}.
		\end{align*}
		
Again, for the above term to be finite as $\epsilon\to 0$, we must have
\begin{align*}
			&\frac{\beta^{q^\prime}}{(1+\delta)\left( n^{\frac{1}{q}-1}\omega_n^{\frac{k}{n}-1}\gamma(k)\right)^{q^\prime} }-d \leq 0 \\& \implies \beta \leq (1+\delta)^{\frac{1}{q^\prime}}d^{\frac{1}{q^\prime}} n^{\frac{1}{q}-1}\omega_n^{\frac{k}{n}-1}\gamma(k).
		\end{align*}
Since $\delta$ is arbitrary, we next send $\delta \to 0$ and obtain $\beta \leq d^{\frac{1}{q^\prime}} n^{\frac{1}{q}-1}\omega_n^{\frac{k}{n}-1}\gamma(k) = \beta_{n,k,q}$, which is the desired result.  For $k$ odd, the inequality follows by a similar argument, we omit the details for brevity.  This completes the proof of the theorem.
\end{proof}

	\section{The Weak-Type Endpoint}\label{main_results_infty}
In this section we give the proofs of Theorems \ref{main-thm-infty}, \ref{main-thm-infty-1}, and  \ref{main-thm-infty-2}, addressing the higher order Adams-Cianchi inequalities in Lorentz spaces $L^{p,\infty}(\Omega)$ for $1<p<+\infty$ critical.
 
	\begin{proof}[Proof of Theorem \ref{main-thm-infty}]
		The assumption $||\, |f|\, ||_{L^{\frac{n}{\alpha},\infty}(\Omega)}\leq 1$ implies
		\begin{align*}
			f^*(s)\leq s^{-\frac{\alpha}{n}} \quad \quad \text{ for all }  s\in (0,|\Omega|).
		\end{align*}
Let $r$ be any admissible exponent in \eqref{eqn-oneil-lemma-exponent} and choose $\tau=t^{\frac{n}{d}}$ in \eqref{eqn-oneil-lemma-rearrangement}. Then using the above we obtain
		\begin{align*}
			( I_\alpha f)^*(t)&\leq (I_\alpha f)^{**}(t)\\&\leq Ct^{-\frac{1}{\theta}}\int_0^{t} f^*(s^{\frac{n}{d}})s^{-1+\frac{n}{dr}} ds+\frac{n}{d\gamma(\alpha)}\omega_n^{\frac{n-\alpha}{n}}\int_{t}^{|\Omega|^{\frac{d}{n}}}  f^*(s^{\frac{n}{d}})  s^{-1+\frac{\alpha}{d}} ds\\&\leq Ct^{-\frac{1}{\theta}}\int_0^{t} s^{-1+\frac{n}{dr}-\frac{\alpha}{d}} ds+\frac{n}{d\gamma(\alpha)}\omega_n^{\frac{n-\alpha}{n}}\int_{t}^{|\Omega|^{\frac{d}{n}}}   s^{-1} ds\\&= A + \frac{n}{d\gamma(\alpha)}\omega_n^{\frac{n-\alpha}{n}}\log\left(\frac{1}{t}\right),
		\end{align*}
		for every $t>0$.	Hence, taking  $t_1=\max\{\nu(\Omega), |\Omega|^{\frac{d}{n}}\}$ and for $\gamma<\left(\frac{d}{n}\right)\gamma(\alpha)\,\omega_n^{-\frac{n-\alpha}{n}}$, we find
		\begin{align*}
			\int_\Omega \exp \left(\gamma |I_\alpha f(x)| \right) \dn &	= 	\int_0^{\nu(\Omega)} \exp \left( \gamma(I_\alpha f)^*(t) \right) \dt \\&\leq \int_{0}^{t_1}\exp \left( \gamma A + \frac{n \gamma}{d\gamma(\alpha)}\omega_n^{\frac{n-\alpha}{n}}\log\left(\frac{1}{t}\right) \right) \dt <+\infty.
		\end{align*}

To verify the optimality, the failure of the estimate at the endpoint, we make use of a variant of the test functions used in the proof of Theorem~\ref{main-thm}.  Without loss of generality, we let $\Omega=B$, the unit ball centered at the origin and define
\begin{align*}
f(x) := \frac{\chi_B(x)}{n\omega_n} \frac{1}{|x|^\alpha}.
\end{align*} 
By Lemma \ref{weak-type_norm} one has the bound
\begin{align*}
\|f\|_{L^{n/\alpha,\infty}(B)} \leq \frac{1}{n} \omega_n^{\alpha/n-1}.
\end{align*}
Let $B_r$ be the ball of radius $r$ with $0<r<1$ and centered at the origin.  By the computation in Theorem \ref{main-thm}, one has
\begin{align*}
\gamma(\alpha) I_\alpha f(x) = \log \frac{1}{|x|} +O(1)
\end{align*}
for $|x| \leq r$, so that
\begin{align*}
\gamma(\alpha) |I_\alpha f(x)| \geq \log \frac{1}{|x|} -C
\end{align*}
for $|x| \leq r$, $r>0$ sufficiently small.

When $\beta= \frac{d}{n} \gamma(\alpha) \omega_n^{- \frac{n-\alpha}{n}}$, for this $f$ we have
\begin{align}\label{eqn-th1.5-1}
				\int_B \exp \left(\frac{\beta|I_\alpha f(x)|}{\|\, |f|\, \|_{L^{\frac{n}{\alpha},\infty}(B)}} \right) \dn \geq \int_{B_r} \exp \left( \frac{d}{n} \omega_n^{- \frac{n-\alpha}{n}} \frac{\log \frac{1}{|x|} - C}{\frac{1}{n} \omega_n^{\alpha/n-1}} \right) \dn.
			\end{align}
In particular, for $r>0$ sufficiently small one has
\begin{align*}
				\int_B \exp \left(\frac{\beta|I_\alpha f(x)|}{\|\, |f|\, \|_{L^{\frac{n}{\alpha},\infty}(B)}} \right) \dn &\geq \exp \left(-Cd \right) \int_{B_r}  \frac{1}{|x|^d} \dn\\
				&= \exp\left(-Cd \right)\int_{1/r^d}^\infty \nu\left( \left\{ \frac{1}{t^{1/d}}> |x|\right\}\right)\;dt \\
			& \geq C_0\exp\left(-Cd \right)	\int_{1/r^d}^\infty \frac{1}{t}\;dt\\
			&=+\infty.
\end{align*}
This shows the failure of the estimate at the endpoint.
\end{proof}
	
	We next give the
	\begin{proof}[Proof of Theorem \ref{main-thm-infty-1}]
		Let $\gamma< d \,\omega_n^{\frac{k}{n}}\,\sqrt{\ell^k_n}$.  From \eqref{pt-wise-org}, we deduce
		\begin{align*}
			\gamma|u(x)|	\leq \gamma \frac{\gamma(k)}{n\omega_{n}\sqrt{\ell^k_n}}\,I_k \left( |\nabla^k u| \right) (x).
		\end{align*}
The assumption on  $\gamma$ implies $\gamma \frac{\gamma(k)}{n\omega_{n}\sqrt{\ell^k_n}}<\left(\frac{d}{n}\right)\gamma(k)\,\omega_n^{-\frac{n-k}{n}}$.  Therefore, the desired result follows from Theorem~\ref{main-thm-infty}.  To see that the constant is sharp, we consider the test functions used in \cite[p.~398]{Alberico}.  Without loss of generality, we take $\Omega=B$, the unit ball, and we define
		\begin{align*}
			u_a(x)=
			\left\{ \begin{array}{ll}
				\varphi\left(\frac{\log \frac{1}{|x|}}{\log a}\right)  & \text{ if } x \in B; \\
				0 & \text{ otherwise, }
			\end{array} \right.
		\end{align*}
		for any increasing smooth function $\varphi: \mathbb{R} \to [0,\infty)$  
		that satisfies $|\varphi'(t)|\leq 1$ and
		\begin{align*}
			\varphi(t)=
			\left\{ \begin{array}{ll}
				0  & \text{ if } t\leq 0; \\
				t -1/2& \text{ if } t\geq 1.
			\end{array} \right.
		\end{align*}
		For this test function, by the product and chain rule, one has that
		\begin{align*}
			|\nabla^k u_a(x)| \leq \frac{1}{\log a} \frac{\sqrt{\ell^k_n}}{|x|^k}\left(1+ \frac{C}{(\log a)^2}\right).
		\end{align*}
		Therefore, by Lemma \ref{weak-type_norm} we have
		\begin{align*}
			\|\nabla^k u_a\|_{L^{n/k,\infty}(B)} \leq \frac{ \omega_n^{k/n}\sqrt{\ell^k_n} }{\log a} \left(1+ \frac{C}{(\log a)^2}\right).
		\end{align*}
		With this inequality, we find
		\begin{align*}
			\int_{B}&\exp\left(\frac{\gamma |u_{a}(x)|}{\|\nabla^k u_a\|_{L^{n/k,\infty}(B)}}\right)\dn \\
			&\;\;\geq \int_{B_{1/a}} \exp\left(\gamma \frac{(\frac{\log \frac{1}{|x|}}{\log a} - 1/2) \log a }{ \omega_n^{k/n} \sqrt{\ell^k_n} \left(1+ \frac{C}{(\log a)^2}\right)}\right)\dn\\
			&=\exp\left(\gamma \frac{ -  \log a }{ 2\omega_n^{k/n} \sqrt{\ell^k_n} \left(1+ \frac{C}{(\log a)^2}\right)}\right) \int_{B_{1/a}}  \frac{1}{|x|^{\gamma_a}}  \dn
		\end{align*}		
		for
		\begin{align*}
			\gamma_a:= \frac{\gamma}{ \omega_n^{k/n} \sqrt{\ell^k_n} \left(1+ \frac{C}{(\log a)^2}\right)}.
		\end{align*}
		However, for $a$ large enough that \eqref{condition-1} is satisfied and $d>\gamma_a$, we have
		\begin{align*}
			\int_{B_{1/a}}  \frac{1}{|x|^{\gamma_a}}  \dn &= \int_{a^{\gamma_a}}^\infty \nu( \{ \frac{1}{t^{1/{\gamma_a}}} >|x|\})\;dt \\
			&\geq  \int_{a^{\gamma_a}}^\infty C_0 \frac{1}{t^{d/{\gamma_a}}}\;dt\\
			&= +\infty.
		\end{align*}

	\end{proof}

	\begin{proof}[Proof of Theorem \ref{main-thm-infty-2}]
		Let $\gamma<\beta_{n,k,\infty}$.  Using \eqref{beta-n-k} and \eqref{pt-wise}, we deduce
		\begin{align*}
			\gamma|u(x)|	\leq \tilde{\gamma}_k I_k \left( |D^k u| \right) (x),
		\end{align*}
		where 
		\begin{equation*}
			\tilde{\gamma}_k=
			\left\{ \begin{array}{ll}
				\gamma & \text{ if } k \text { is even}; \\
				\gamma \frac{\gamma(k)}{\widetilde{\gamma}(k-1)} &\text{ if }  k \text { is odd}.
			\end{array} \right.
		\end{equation*}
The assumption on $\gamma$ implies $\tilde{\gamma}_k <\beta_{n,k,\infty}$, and therefore Theorem~\ref{main-thm-infty} yields the desired inequality.
		
Concerning optimality, we utilize the test functions used in the proof of Theorem \ref{main-thm-infty-1} and argue the case of $k$ odd, as the computation for $k$ even is just a replacement of constants.  By the product and chain rule, one has that
		\begin{align*}
			|D^k u_a(x)| \leq \frac{1}{\log a} \frac{\tilde{\gamma}(k-1)}{n \omega_n} \frac{1}{|x|^k}\left(1+ \frac{C}{(\log a)^2}\right).
		\end{align*}
		Therefore, by Lemma \ref{weak-type_norm} we have
		\begin{align*}
			\|\nabla^k u_a\|_{L^{n/k,\infty}(B)} \leq  \frac{\omega_n^{k/n-1}\tilde{\gamma}(k-1)}{n } \frac{1 }{\log a} \left(1+ \frac{C}{(\log a)^2}\right).
		\end{align*}
		
As in the proof of the preceding Theorem's optimality, we find 
		\begin{align*}
			\int_{B}&\exp\left(\frac{\gamma |u_{a}(x)|}{\|D^k u_a\|_{L^{n/k,\infty}(B)}}\right)\dn \\
			&\;\;\geq \int_{B_{1/a}} \exp\left(\gamma \frac{(\frac{\log \frac{1}{|x|}}{\log a} - 1/2) \log a }{\frac{\omega_n^{k/n-1}\tilde{\gamma}(k-1)}{n } \frac{1 }{\log a} \left(1+ \frac{C}{(\log a)^2}\right) }\right)\dn\\
			&=\exp\left(\gamma \frac{ -  \log a }{ 2\frac{\omega_n^{k/n-1}\tilde{\gamma}(k-1)}{n } \left(1+ \frac{C}{(\log a)^2}\right)}\right) \int_{B_{1/a}}  \frac{1}{|x|^{\gamma_a}}  \dn
		\end{align*}		
		for
		\begin{align*}
			\gamma_a:= \frac{\gamma}{\frac{\omega_n^{k/n-1}\tilde{\gamma}(k-1)}{n } \left(1+ \frac{C}{(\log a)^2}\right)}.
		\end{align*}
		However, for $a$ large enough that \eqref{condition-1} is satisfied and $d>\gamma_a$, we have
		\begin{align*}
			\int_{B_{1/a}}  \frac{1}{|x|^{\gamma_a}}  \dn &= \int_{a^{\gamma_a}}^\infty \nu( \{ \frac{1}{t^{1/{\gamma_a}}} >|x|\})\;dt \\
			&\geq  \int_{a^{\gamma_a}}^\infty C_0 \frac{1}{t^{d/{\gamma_a}}}\;dt\\
			&= +\infty.
		\end{align*}

	\end{proof}
	
\section{Trace Hansson-Brezis-Wainger}\label{HBW}
We begin this section with the
\begin{proof}[Proof of Theorem \ref{traceHBW}]
As in the proofs of the preceding theorems, we let $r$ be any acceptable exponent within the range specified in \eqref{eqn-oneil-lemma-exponent} and choose $\tau=t^{\frac{n}{d}}$ in \eqref{eqn-oneil-lemma-rearrangement} to obtain
\begin{align*}
			(I_\alpha f)^{*}(t) \leq (I_\alpha f)^{**}(t)\leq Ct^{-\frac{1}{\theta}}\int_0^{t^{\frac{n}{d}}} f^*(u)u^{-1+\frac{1}{r}} \du+\frac{1}{\gamma(\alpha)}\omega_n^{\frac{n-\alpha}{n}}\int_{t^{\frac{n}{d}}}^{|\Omega|}  f^*(u)  u^{-\frac{n-\alpha}{n}} \du.
		\end{align*}

This pointwise estimate and the triangle inequality imply 
\begin{align}\label{trBW1}
\left(\int_{0}^{\nu(\Omega)}\frac{\left((I_\alpha f)^{*}(t)\right)^q}{\left(1+|\log \frac{t}{\nu(\Omega)}|\right)^q}\frac{\dt}{t}\right)^{1/q} \leq  \I_1 + \I_2,
\end{align}
for 
		\begin{align*}
			\I_1^q=\int_{0}^{\nu(\Omega)}\frac{\left(\int_0^{t^{\frac{n}{d}}} f^*(u)u^{-1+\frac{1}{r}} \du\right)^q}{\left(1+|\log \frac{t}{\nu(\Omega)}|\right)^q}\frac{\dt}{t^{1+\frac{q}{\theta}}},
		\end{align*}
		and 
		\begin{align*}
			\I_2^q=\int_{0}^{\nu(\Omega)}\frac{\left(\int_{t^{\frac{n}{d}}}^{|\Omega|}  f^*(u)  u^{-\frac{n-\alpha}{n}} \du\right)^q}{\left(1+|\log \frac{t}{\nu(\Omega)}|\right)^q}\frac{\dt}{t}.
		\end{align*}
We next estimate $\I_1$ and $\I_2$ via Hardy inequalities.  Concerning $\I_1$, by the change of variables $s=t^{\frac{n}{d}}$ we have
		\begin{align*}
			\I_1^q&\leq \int_{0}^{\nu(\Omega)}t^{-{1-\frac{q}{\theta}}}\left(\int_0^{t^{\frac{n}{d}}} f^*(u)u^{-1+\frac{1}{r}} \du\right)^q\dt\\&\leq \int_{0}^{\infty}t^{-{1-\frac{q}{\theta}}}\left(\int_0^{t^{\frac{n}{d}}} f^*(u)u^{-1+\frac{1}{r}} \du\right)^q\dt\\&=\frac{d}{n}\int_{0}^{\infty}s^{-{1-\frac{dq}{n\theta}}}\left(\int_0^{s} f^*(u)u^{-1+\frac{1}{r}} \du\right)^q\ds.
		\end{align*}
An application of the Hardy inequality recorded above in Lemma \ref{another-hardy} with $p=q \in (1,\infty)$, $w={1+\frac{dq}{n\theta}}>1$, and $\psi(u)=f^*(u)u^{-1+\frac{1}{r}}$, yields
		\begin{align*}
			\I_1^q\lesssim \int_{0}^{\infty}s^{-\frac{dq}{n\theta}+\frac{q}{r}} {f^*}^q(s) \frac{\ds}{s}=\int_{0}^{\infty}\left(s^{\frac{n\theta-d}{nr\theta}} {f^*}(s)\right)^q \frac{\ds}{s}=\|f\|^q_{L^{\frac{nr\theta}{n\theta-d},q}(\Omega)}.
		\end{align*}
The relations $\theta=\frac{rd}{n-\alpha r}$, $r<\frac{n}{\alpha}$, and $1<r$, imply
\begin{align*}
			\frac{n\theta-d}{nr\theta}-\frac{\alpha}{n}=\frac{1}{r}-\frac{d}{nr\theta}-\frac{\alpha}{n}=\left(\frac{1}{r}-\frac{\alpha}{n}\right) \left(1-\frac{1}{r}\right)>0.
\end{align*}
In particular, $\frac{nr\theta}{n\theta-d}<\frac{n}{\alpha}$, and since $|\Omega|<+\infty$, using the nested embedding with respect to the second variable of Lorentz space, we have
\begin{align}\label{trBW2}
			\I_1^q\lesssim\|f\|^q_{L^{\frac{n}{\alpha},q}(\Omega)}.
		\end{align}
		
To estimate $\I_2$, we define  $t_1=\max\{\nu(\Omega), |\Omega|^{\frac{d}{n}}\}$ and make the change of variables $s^{\frac{n}{d}}=u$ to obtain
		\begin{align*}
			\I_2^q&=\int_{0}^{\nu(\Omega)}\frac{\left(\int_{t^{\frac{n}{d}}}^{|\Omega|}  f^*(u)  u^{-\frac{n-\alpha}{n}} \du\right)^q}{\left(1+|\log \frac{t}{\nu(\Omega)}|\right)^q}\frac{\dt}{t}\\&=\left(\frac{n}{d}\right)^q\int_{0}^{\nu(\Omega)}\frac{\left(\int_{t}^{|\Omega|^{\frac{d}{n}}}  f^*(s^{\frac{n}{d}})  s^{-\frac{d-\alpha}{d}} \ds\right)^q}{\left(1+|\log \frac{t}{\nu(\Omega)}|\right)^q}\frac{\dt}{t}\\&\leq \left(\frac{n}{d}\right)^q \int_{0}^{t_1}\frac{\left(\int_{t}^{t_1}  f^*(s^{\frac{n}{d}})  s^{-\frac{d-\alpha}{d}} \ds\right)^q}{\left(1+|\log \frac{t}{t_1}|\right)^q}\frac{\dt}{t}.
		\end{align*}
Here an application of the logarithmic Hardy inequality  recorded above in Lemma~\ref{log-hardy} with $p=q$, $R=t_1$, and $\psi(s)=f^*(s^{\frac{n}{d}})  s^{-\frac{d-\alpha}{d}}$ yields
		\begin{align*}
			\I_2^q \lesssim \int_{0}^{t_1}t^{q-1}{f^*}^q(t^{\frac{n}{d}})  t^{-\frac{(d-\alpha)q}{d}}\dt.
		\end{align*}
Finally, the change of variable $t^{\frac{n}{d}}=s$ and the observation that $f^*(s)=0$ for $s\geq |\Omega|$ yields
		\begin{align}\label{trBW3}
			\I_2^q \lesssim \int_{0}^{\infty}\left({f^*}(s)  s^{\frac{\alpha }{n}}\right)^q\frac{\ds}{s}=\|f\|^q_{L^{\frac{n}{\alpha},q}(\Omega)}.
		\end{align}
The combination of \eqref{trBW1}, \eqref{trBW2}, and \eqref{trBW3} yields the claimed inequality and thus completes the proof.
	\end{proof}

We conclude the paper with the 

\begin{proof}[Proof of Corollary \ref{diff_traceHBW}]
This follows immediately from Theorem \ref{traceHBW} and the inequality \eqref{representation_inequality}.
\end{proof}

	\section*{Acknowledgments} 
	The author P.~Roychowdhury is supported by the National Theoretical Science Research Center Operational Plan (Project number: 112L104040).  D.~Spector is supported by the National Science and Technology Council of Taiwan under research grant numbers 110-2115-M-003-020-MY3/113-2115-M-003-017-MY3 and the Taiwan Ministry of Education under the Yushan Fellow Program.
	
	\begin{bibdiv}
		\begin{biblist}

			\bib{Adams1988}{article}{
				author={Adams, David R.},
				title={A sharp inequality of J. Moser for higher order derivatives},
				journal={Ann. of Math. (2)},
				volume={128},
				date={1988},
				number={2},
				pages={385--398},
				issn={0003-486X},
				review={\MR{960950}},
				doi={10.2307/1971445},
			}
			
			\bib{Adams1973}{article}{
				author={Adams, David R.},
				title={Traces of potentials. II},
				journal={Indiana Univ. Math. J.},
				volume={22},
				date={1972/73},
				pages={907--918},
				issn={0022-2518},
				review={\MR{313783}},
				doi={10.1512/iumj.1973.22.22075},
			}

			\bib{AdamsXiao}{article}{
				author={Adams, David R.},
				author={Xiao, Jie},
				title={Morrey potentials and harmonic maps},
				journal={Comm. Math. Phys.},
				volume={308},
				date={2011},
				number={2},
				pages={439--456},
				issn={0010-3616},
				review={\MR{2851148}},
				doi={10.1007/s00220-011-1319-5},
			}

			\bib{AdamsXiao2}{article}{
				author={Adams, David R.},
				author={Xiao, Jie},
				title={Erratum to: Morrey potentials and harmonic maps [MR2851148]},
				journal={Comm. Math. Phys.},
				volume={339},
				date={2015},
				number={2},
				pages={769--771},
				issn={0010-3616},
				review={\MR{3370618}},
				doi={10.1007/s00220-015-2409-6},
			}
			
			\bib{Alberico}{article}{
				author={Alberico, Angela},
				title={Moser type inequalities for higher-order derivatives in Lorentz
					spaces},
				journal={Potential Anal.},
				volume={28},
				date={2008},
				number={4},
				pages={389--400},
				issn={0926-2601},
				review={\MR{2403289}},
				doi={10.1007/s11118-008-9085-5},
			}

			\bib{AFT}{article}{
				author={Alvino, Angelo},
				author={Ferone, Vincenzo},
				author={Trombetti, Guido},
				title={Moser-type inequalities in Lorentz spaces},
				journal={Potential Anal.},
				volume={5},
				date={1996},
				number={3},
				pages={273--299},
				issn={0926-2601},
				review={\MR{1389498}},
				doi={10.1007/BF00282364},
			}
			
			\bib{BW}{article}{
				author={Br\'ezis, Ha\"im},
				author={Wainger, Stephen},
				title={A note on limiting cases of Sobolev embeddings and convolution
					inequalities},
				journal={Comm. Partial Differential Equations},
				volume={5},
				date={1980},
				number={7},
				pages={773--789},
				issn={0360-5302},
				review={\MR{0579997}},
				doi={10.1080/03605308008820154},
			} 

			\bib{Chen-Spector}{article}{
   author={Chen, You-Wei},
   author={Spector, Daniel},
   title={On functions of bounded $\beta$-dimensional mean oscillation},
   journal={Adv. Calc. Var.},
   volume={17},
   date={2024},
   number={3},
   pages={975--996},
   issn={1864-8258},
   review={\MR{4767358}},
   doi={10.1515/acv-2022-0084},
}
		
			\bib{Cianchi:2008}{article}{
				author={Cianchi, Andrea},
				title={Moser-Trudinger trace inequalities},
				journal={Adv. Math.},
				volume={217},
				date={2008},
				number={5},
				pages={2005--2044},
				issn={0001-8708},
				review={\MR{2388084}},
				doi={10.1016/j.aim.2007.09.007},
			}
			
			\bib{CP}{article}{
				author={Cwikel, Michael},
				author={Pustylnik, Evgeniy},
				title={Sobolev type embeddings in the limiting case},
				journal={J. Fourier Anal. Appl.},
				volume={4},
				date={1998},
				number={4-5},
				pages={433--446},
				issn={1069-5869},
				review={\MR{1658620}},
				doi={10.1007/BF02498218},
			}

			\bib{FontanaMorpurgo}{article}{
				author={Fontana, Luigi},
				author={Morpurgo, Carlo},
				title={Adams inequalities on measure spaces},
				journal={Adv. Math.},
				volume={226},
				date={2011},
				number={6},
				pages={5066--5119},
				issn={0001-8708},
				review={\MR{2775895}},
				doi={10.1016/j.aim.2011.01.003},
			}

			\bib{FontanaMorpurgo:na}{article}{
				author={Fontana, Luigi},
				author={Morpurgo, Carlo},
				title={Adams inequalities for Riesz subcritical potentials},
				journal={Nonlinear Anal.},
				volume={192},
				date={2020},
				pages={111662, 32},
				issn={0362-546X},
				review={\MR{4021978}},
				doi={10.1016/j.na.2019.111662},
			}

		\bib{Fuglede}{article}{
				author={Fuglede, Bent},
				title={The logarithmic potential in higher dimensions},
				journal={Mat.-Fys. Medd. Danske Vid. Selsk.},
				volume={33},
				date={1960},
				number={1},
				pages={14 (1960)},
				issn={0023-3323},
				review={\MR{0125248}},
			}
			
			\bib{GS}{article}{
				author={Garg, Rahul},
				author={Spector, Daniel},
				title={On the regularity of solutions to Poisson's equation},
				journal={C. R. Math. Acad. Sci. Paris},
				volume={353},
				date={2015},
				number={9},
				pages={819--823},
				issn={1631-073X},
				review={\MR{3377679}},
				doi={10.1016/j.crma.2015.07.001},
			}
			
			\bib{GS1}{article}{
				author={Garg, Rahul},
				author={Spector, Daniel},
				title={On the role of Riesz potentials in Poisson's equation and Sobolev
					embeddings},
				journal={Indiana Univ. Math. J.},
				volume={64},
				date={2015},
				number={6},
				pages={1697--1719},
				issn={0022-2518},
				review={\MR{3436232}},
				doi={10.1512/iumj.2015.64.5706},
			}
			
			\bib{grafakos}{book}{
				author={Grafakos, Loukas},
				title={Classical Fourier analysis},
				series={Graduate Texts in Mathematics},
				volume={249},
				edition={3},
				publisher={Springer, New York},
				date={2014},
				pages={xviii+638},
				isbn={978-1-4939-1193-6},
				isbn={978-1-4939-1194-3},
				review={\MR{3243734}},
				doi={10.1007/978-1-4939-1194-3},
			}
			
			\bib{Hansson}{article}{
   author={Hansson, Kurt},
   title={Imbedding theorems of Sobolev type in potential theory},
   journal={Math. Scand.},
   volume={45},
   date={1979},
   number={1},
   pages={77--102},
   issn={0025-5521},
   review={\MR{0567435}},
   doi={10.7146/math.scand.a-11827},
}

			\bib{H:1972}{article}{
				author={Hedberg, Lars Inge},
				title={On certain convolution inequalities},
				journal={Proc. Amer. Math. Soc.},
				volume={36},
				date={1972},
				pages={505--510},
				issn={0002-9939},
				review={\MR{312232}},
				doi={10.2307/2039187},
			}
			\bib{Y:1962}{article}{
				author={Judovi\v{c}, V. I.},
				title={Some bounds for solutions of elliptic equations},
				language={Russian},
				journal={Mat. Sb. (N.S.)},
				volume={59 (101)},
				date={1962},
				number={suppl.},
				pages={229--244},
				review={\MR{0149062}},
			}

			\bib{MS}{article}{
				author={Mart\'{\i}nez, \'{A}ngel D.},
				author={Spector, Daniel},
				title={An improvement to the John-Nirenberg inequality for functions in
					critical Sobolev spaces},
				journal={Adv. Nonlinear Anal.},
				volume={10},
				date={2021},
				number={1},
				pages={877--894},
				issn={2191-9496},
				review={\MR{4191703}},
				doi={10.1515/anona-2020-0157},
			}
			
			\bib{Mazyabook}{book}{
   author={Maz'ya, Vladimir},
   title={Sobolev spaces with applications to elliptic partial differential
   equations},
   series={Grundlehren der mathematischen Wissenschaften [Fundamental
   Principles of Mathematical Sciences]},
   volume={342},
   edition={augmented edition},
   publisher={Springer, Heidelberg},
   date={2011},
   pages={xxviii+866},
   isbn={978-3-642-15563-5},
   review={\MR{2777530}},
   doi={10.1007/978-3-642-15564-2},
}
			
						\bib{MoriiSatoSawano}{article}{
				author={Morii, Kei},
				author={Sato, Tokushi},
				author={Sawano, Yoshihiro},
				title={Certain identities on derivatives of radial homogeneous and
					logarithmic functions},
				journal={Commun. Math. Anal.},
				volume={9},
				date={2010},
				number={2},
				pages={51--66},
				issn={1938-9787},
				review={\MR{2737754}},
			}

			\bib{Moser}{article}{
				author={Moser, J.},
				title={A sharp form of an inequality by N. Trudinger},
				journal={Indiana Univ. Math. J.},
				volume={20},
				date={1970/71},
				pages={1077--1092},
				issn={0022-2518},
				review={\MR{301504}},
				doi={10.1512/iumj.1971.20.20101},
			}
			
			\bib{oneil}{article}{
				author={O'Neil, Richard},
				title={Convolution operators and $L(p,\,q)$ spaces},
				journal={Duke Math. J.},
				volume={30},
				date={1963},
				pages={129--142},
				issn={0012-7094},
				review={\MR{0146673}},
			}

			\bib{P:1966}{article}{
				author={Peetre, Jaak},
				title={Espaces d'interpolation et th\'{e}or\`eme de Soboleff},
				language={French},
				journal={Ann. Inst. Fourier (Grenoble)},
				volume={16},
				date={1966},
				number={fasc. 1},
				pages={279--317},
				issn={0373-0956},
				review={\MR{221282}},
			}
			\bib{P:1965}{article}{
				author={Poho\v{z}aev, S. I.},
				title={On the eigenfunctions of the equation $\Delta u+\lambda f(u)=0$},
				language={Russian},
				journal={Dokl. Akad. Nauk SSSR},
				volume={165},
				date={1965},
				pages={36--39},
				issn={0002-3264},
				review={\MR{0192184}},
			}
			
			\bib{RSS}{article}{
   author={Rai\c t\u a, Bogdan},
   author={Spector, Daniel},
   author={Stolyarov, Dmitriy},
   title={A trace inequality for solenoidal charges},
   journal={Potential Anal.},
   volume={59},
   date={2023},
   number={4},
   pages={2093--2104},
   issn={0926-2601},
   review={\MR{4684387}},
   doi={10.1007/s11118-022-10008-x},
}
			
			\bib{Shafrir}{article}{
				author={Shafrir, Itai},
				title={The best constant in the embedding of $W^{N,1}(\Bbb R^N)$ into
					$L^\infty(\Bbb R^N)$},
				journal={Potential Anal.},
				volume={50},
				date={2019},
				number={4},
				pages={581--590},
				issn={0926-2601},
				review={\MR{3938494}},
				doi={10.1007/s11118-018-9695-5},
			}
			
			\bib{SS}{article}{
				author={Shafrir, Itai},
				author={Spector, Daniel},
				title={Best constants for two families of higher order critical Sobolev
					embeddings},
				journal={Nonlinear Anal.},
				volume={177},
				date={2018},
				number={part B},
				part={part B},
				pages={753--769},
				issn={0362-546X},
				review={\MR{3886600}},
				doi={10.1016/j.na.2018.04.027},
			}

			\bib{S}{book}{
				author={Stein, Elias M.},
				title={Singular integrals and differentiability properties of functions},
				series={Princeton Mathematical Series, No. 30},
				publisher={Princeton University Press, Princeton, N.J.},
				date={1970},
				pages={xiv+290},
				review={\MR{0290095}},
			}

			\bib{S:1971}{article}{
				author={Strichartz, Robert S.},
				title={A note on Trudinger's extension of Sobolev's inequalities},
				journal={Indiana Univ. Math. J.},
				volume={21},
				date={1971/72},
				pages={841--842},
				issn={0022-2518},
				review={\MR{293389}},
				doi={10.1512/iumj.1972.21.21066},
			}
			\bib{T:1967}{article}{
				author={Trudinger, Neil S.},
				title={On imbeddings into Orlicz spaces and some applications},
				journal={J. Math. Mech.},
				volume={17},
				date={1967},
				pages={473--483},
				review={\MR{0216286}},
				doi={10.1512/iumj.1968.17.17028},
			}

%
			
%
%
%

		\end{biblist}
		
	\end{bibdiv}
	
\end{document}